\documentclass[final,3p,times]{elsarticle}

\usepackage{amssymb}
\usepackage{bm}
\usepackage{amsmath}
\usepackage{subcaption}
\usepackage{amsthm}
\usepackage{ulem}

\newtheorem{theorem}{Theorem}
\newtheorem{lemma}{Lemma}
\newtheorem{corollary}{Corollary}
\newtheorem{definition}{Definition}

\journal{A journal}

\begin{document}

\begin{frontmatter}



\title{On the Role of Split Formulations on Aliasing Errors and Entropy Stability of Discontinuous Galerkin Schemes}

\author[label1]{Mathias Dufresne-Piché}
\author[label1]{Siva Nadarajah}
\affiliation[label1]{organization={McGill University},
            addressline={845 Sherbrooke St W},
            city={Montreal},
            postcode={H3A 0G4},
            state={Quebec},
            country={Canada}}

\begin{abstract}
In this work, we formally investigate the dealiasing properties of split form discontinuous Galerkin (DG) discretizations for the one-dimensional Burgers problem. By generalizing the proof of Blaisdell et al. {\cite{blaisdell1996effect}} for spectral discretizations, we show that split form DG schemes achieve dealiasing in the weakly underresolved range through integration error cancellation on the conservative and non-conservative forms. As a corollary, we identify quadrature- and order-dependent pairs of splitting coefficients that eliminate the dominant component of the aliasing error. While these ``optimized'' splitting coefficients minimize integration errors on the numerical scheme, we show that the skew-symmetric aliasing pattern resulting from the entropy stable split is required to maintain long-term stability of the numerical solution. {The proposed framework provides an alternative proof for the entropy stability of the DG discretization of the (1/3, 2/3) split formulation introduced by Gassner \cite{gassner2013skew} which clarifies the connection between DG aliasing errors and entropy stability.} Finally, we also show that the entropy stable split is associated with lower aliasing errors in the weakly underresolved range compared to the conservative form.
\end{abstract}



\begin{keyword}
Discontinuous Galerkin \sep Split form \sep Aliasing error \sep Entropy stability \sep High-order




\end{keyword}

\end{frontmatter}



\section{Introduction}
The practical relevance of split form discretizations has long been recognized in the computational fluid dynamics (CFD) community. In general, discrete split forms are obtained as weighted averages of discretizations of the conservative and non-conservative formulations of a hyperbolic conservation law. While at the continuous level these formulations are mathematically identical, discretizations of the latter yield distinct numerical properties \cite{10.1007/978-3-642-46395-2_29}. In \cite{ZANG199127}, Zang observed numerically, that split form spectral discretizations for the non-linear terms of the incompressible Navier-Stokes equations resulted in reduced aliasing errors. This empirical observation was formalized in the work of \cite{blaisdell1996effect}. Through careful analysis of the one-dimensional Burgers problem, they proved that split form spectral discretizations of quadratic non-linearities achieved dealiasing in the weakly underresolved range by leveraging cancellation of aliasing errors resulting from conservative and non-conservative discretizations. These findings were further investigated by \cite{CHOW2003366} for finite difference schemes in the context of incompressible large-eddy simulations.

In the finite difference and finite volume frameworks, split formulations are also well-known for enabling the construction of skew-symmetric discretizations of conservation laws that allow for the enforcement of provable discrete entropy stability guarantees \cite{jameson2008construction, GERRITSEN1996245}. Entropy stability is particularly relevant for problems involving shock formation as it offers a natural approach to ensure physicality of weak numerical solutions. More recently, these ideas have been extended to the discontinuous Galerkin (DG) framework and used to create entropy stable split form DG schemes for the Burgers equation \cite{gassner2013skew} and the advection equation with variable wave-speed \cite{doi:10.1137/130928650}. Following the approach of \cite{FISHER20113727}, it is possible to rewrite split DG formulations in terms of high-order summation-by-parts (SBP) operators acting on sub-cell two-point flux functions \cite{GASSNER201639}. Using the flux differencing (FD) framework of \cite{chan2018discretely, chan2019skew}, this allows for the construction of split form DG and entropy stable DG discretizations for general hyperbolic conservation laws under arbitrary choices of volume and surface quadrature rules. The FD formulation was later extended to encompass energy stable flux reconstruction schemes (ESFR) \cite{CASTONGUAY2013400} through the work of \cite{CICCHINO2022111094, CICCHINO2025113532}.

It is well known that aliasing errors incurred by DG discretizations are directly tied to scheme stability and numerical solution quality. While the impact of quadrature strength on DG aliasing errors has been thoroughly studied in \cite{MENGALDO201556}, to the knowledge of the authors of this work, a complete theoretical description of the aliasing errors associated with split form DG schemes is still lacking. Empirical evidence \cite{WINTERS20181} suggests that split DG formulations for the Burgers and the Euler equations achieve aliasing error reduction akin to split form spectral discretizations. A formal proof for this phenomenon is, however, yet to be proposed. It should be noted that the impact of aliasing errors on numerical solution stability has been investigated by \cite{manzanero2018insights} for split form DG schemes of the advection equation with variable wave-speed. This work is, however, restricted to linear problems and merely focuses on the temporal growth of aliasing-driven instabilities and hence does not characterize the modal content of the associated aliasing errors.

In this paper, we aim to close this theoretical gap by proposing a generalization of the work \cite{blaisdell1996effect} on split form spectral discretizations to the DG framework. More precisely, we provide a full characterization of aliasing errors for split form DG schemes for the one-dimensional Burgers problem for arbitrary quadrature rules and therefore provide a formal proof for the behaviour observed by \cite{WINTERS20181}. By studying the modal structure of aliasing errors incurred by split form DG schemes, we also clarify the connection between discrete entropy stability and dealiasing for the one-dimensional Burgers problem.

This paper is organized as follows. In Section \ref{subsec:prob_statement}, we first show how aliasing errors for split DG discretizations of quadratic non-linearities can be described by a single rank 3 tensor. Sections \ref{subsec:proj_struct} and \ref{subsec:aliasing_error} describe how the structure of this tensor is affected by different choices of quadrature rules and splitting coefficients. In Section \ref{subsec:E_stab}, the specific tensor structure associated with entropy stable split forms is discussed. Sections \ref{sec:T_integration} and \ref{sec:inexact_mass} briefly describe the effects of temporal integration of aliasing errors and the impacts of using an inexact mass matrix in the DG discretization. Finally, in Section \ref{sec:num_exp}, our theoretical results are validated via numerical experiments.

\section{Aliasing Error Analysis for Quadratic Split Form DG Discretizations}

\subsection{Problem Statement for the 1D Burgers Equation}
\label{subsec:prob_statement}
We are concerned with split form DG spatial discretizations for the one-dimensional Burgers equation
\begin{equation}
    \frac{\partial u}{\partial t}+\frac{1}{2}\frac{\partial u^2}{\partial x} = 0,
    \quad
    x \in \Omega,
    \quad
    t \in [0, T],
\end{equation}
where $T > 0$ and $\Omega := [-1,1]$ denotes the reference element. These discretizations of interest are obtained from a weighted average of the conservative and non-conservative DG formulations. In matrix form, using the FD framework \cite{chan2019skew, chan2018discretely}, they can be written as
\begin{equation}
    \mathbf{M}\frac{d\mathbf{u}}{dt} +
    \begin{bmatrix}
        \bm{\chi}_q \\
        \bm{\chi}_f
    \end{bmatrix}^T
    ((\mathbf{Q}_h - \mathbf{Q}_h^T) \circ \mathbf{F}) \mathbf{1}
    + \bm{\chi}_f^T \mathbf{B} \mathbf{f}^* = 0,
    \label{eq:FD_formulation}
\end{equation}
where $\mathbf{f}^*$ is the numerical flux and $\mathbf{F}$ is the two-point flux matrix defined as
\begin{equation}
    \mathbf{F}_{ij} := F(u_i, u_j) := \frac{1}{4}\alpha(u_i^2 + u_j^2) + \frac{1}{2}\beta u_i u_j
    \label{eq:2ptflux}
\end{equation}
with splitting coefficients $\alpha$, $\beta$ chosen such that $\alpha + \beta = 1$. In Eq.(\ref{eq:FD_formulation}), $\mathbf{M}$ denotes the mass matrix, $\mathbf{Q}_h - \mathbf{Q}_h^T$ is a general skew-symmetric operator, $\mathbf{B}$ is the boundary integration matrix and $\bm{\chi}_q$ and $\bm{\chi}_f$ represent interpolation matrices at the volume and face quadrature nodes respectively. We refer the reader to the work of \cite{chan2019skew, chan2018discretely} for a complete description of the flux differencing formulation.

For two-point fluxes given by Eq.(\ref{eq:2ptflux}), it is straightforward to show (see \ref{app1}) that Eq.(\ref{eq:FD_formulation}) is mathematically equivalent to the semi-discrete weak formulation
\begin{align}
    \int_\Omega \frac{d}{dt}\Pi(\phi u) d\Omega
    - \alpha \frac{1}{2}\int_\Omega \frac{d \phi}{dx} \Pi(u^2) d\Omega
    + \beta \int_{\Omega} \frac{du}{dx}\Pi(\phi u) d\Omega
    - \beta \frac{1}{2}\int_{\Gamma} \phi u^2 \hat{n} d\Gamma
    + \int_{\Gamma} \phi f^* \hat{n} d\Gamma
    = 0,
    \label{eq:FD_continuous}
\end{align}
where $\Gamma := \partial\Omega$ and $\hat{n}$ is the outward normal on $\Gamma$. In Eq.(\ref{eq:FD_continuous}), $\phi, u \in \mathcal{P}^p$ and $\Pi : L2 \to \mathcal{P}^q$ is the interpolation operator at the $(q+1)$ volume quadrature nodes. It is assumed that volume quadrature nodes are chosen such that the strength of the underlying quadrature is at least $2p-1$. Volumetric aliasing effects on the quadratic non-linearity can hence be investigated by studying the term
\begin{equation}
    - \alpha \frac{1}{2}\int_\Omega \frac{d \phi}{dx} \Pi(u^2) d\Omega
    + \beta \int_{\Omega} \frac{du}{dx}\Pi(\phi u) d\Omega.
    \label{eq:vol_terms}
\end{equation}
It should be noted that when the mass matrix is inexact (as is the case when using Gauss-Legendre-Lobatto volume quadrature nodes), aliasing errors will also be introduced by the integral of the time derivative term. This will be discussed in more detail in section \ref{sec:inexact_mass}.

Without loss of generality, we express $u$ as
\begin{equation}
    u = \sum_{i=0}^p u_i L_i,
\end{equation}
where $L_i$ denotes the $i$th Legendre polynomial, and consider writing Eq.(\ref{eq:vol_terms}) in the Legendre basis. For $0 \leq k \leq p$, this yields
\begin{align}
    - &\alpha \frac{1}{2}\int_\Omega \frac{d L_k}{dx} \Pi(u^2) d\Omega
    + \beta \int_{\Omega} \frac{du}{dx}\Pi(L_k u) d\Omega \nonumber \\
    &=
    -\sum_{i,j=0}^p \frac{1}{2}u_iu_j\alpha \int_\Omega \frac{d L_k}{dx} \Pi(L_i L_j) d\Omega
    +
    \sum_{i,j=0}^p u_iu_j \beta \int_\Omega \frac{d L_i}{dx} \Pi(L_k L_j) d\Omega \nonumber \\
    &=\sum_{i,j=0}^p \frac{1}{2}u_iu_j \int_\Omega \left(\beta \frac{d L_i}{dx} \Pi(L_j L_k)
    +
    \beta \frac{d L_j}{dx} \Pi(L_i L_k)-\alpha \frac{d L_k}{dx} \Pi(L_i L_j)
    \right)d\Omega,
    \label{eq:vol_terms_exp}
\end{align}
where symmetry of the product expansion has been used in the final step. Consequently, assuming that no fortuitous cancellation occurs among the summands of Eq.(\ref{eq:vol_terms_exp}), aliasing errors on the quadratic non-linearity are fully characterized by the error tensor
\begin{align}
    \Delta_{ijk}(\alpha, \beta) :=
    \int_\Omega \frac{dL_k}{dx}L_iL_j d\Omega
    -\beta \int_\Gamma L_iL_jL_k \hat{n}d\Gamma
    +
    \int_\Omega \left(\beta \frac{d L_i}{dx} \Pi(L_j L_k)
    +
    \beta \frac{d L_j}{dx} \Pi(L_i L_k)-\alpha \frac{d L_k}{dx} \Pi(L_i L_j)
    \right)d\Omega.
    \label{eq:Delta_def}
\end{align}
Provided that $\alpha + \beta=1$, this can be equivalently written as
\begin{align}
    \Delta_{ijk}(\alpha, \beta)
    &=
   {
    \alpha\int_\Omega \frac{dL_k}{dx}L_iL_j d\Omega
    + \beta\int_\Omega \frac{dL_k}{dx}L_iL_j d\Omega
    -\beta \int_\Gamma L_iL_jL_k \hat{n}d\Gamma \nonumber} \\
    &{\quad+
    \int_\Omega \left(\beta \frac{d L_i}{dx} \Pi(L_j L_k)
    +
    \beta \frac{d L_j}{dx} \Pi(L_i L_k)-\alpha \frac{d L_k}{dx} \Pi(L_i L_j)
    \right)d\Omega \nonumber} \\
    &=
    \int_\Omega \left(\alpha \frac{d L_k}{dx} L_i L_j - \beta \frac{d L_i}{dx} L_j L_k
    -
    \beta \frac{d L_j}{dx} L_i L_k
    \right)d\Omega \nonumber\\
    &\quad+
    \int_\Omega \left(\beta \frac{d L_i}{dx} \Pi(L_j L_k)
    +
    \beta \frac{d L_j}{dx} \Pi(L_i L_k)-\alpha \frac{d L_k}{dx} \Pi(L_i L_j)
    \right)d\Omega,
\end{align}
{where integration by parts has been used. Defining}
\begin{equation}
    \tilde{\delta}_{lmn} := \int_\Omega \frac{d L_l}{dx} \Pi(L_m L_n)d\Omega - \int_\Omega \frac{d L_l}{dx} L_m L_nd\Omega,
    \label{eq:tdelta_def}
\end{equation}
one finally obtains
\begin{equation}
    \Delta_{ijk}(\alpha, \beta) = \beta \tilde{\delta}_{ijk} + \beta \tilde{\delta}_{jki} - \alpha \tilde{\delta}_{kij}.
    \label{eq:Delta_components}
\end{equation}
{From Eq.(\ref{eq:Delta_components}), it is clear that $\Delta_{ijk}(\alpha, \beta)$ measures the integration error on the non-linear flux resulting from the use of an inexact quadrature for the evaluation of the DG volumetric terms. However, by virtue of the preceding discussion, it can be seen that this integration error effectively stems from sampling of the non-linear flux at the volume quadrature nodes. Hence, integration error can equivalently be regarded as aliasing error in the same sense as that intended by \cite{blaisdell1996effect} in their study of spectral methods.}

It should finally be noted that while we used the one-dimensional Burgers equation to derive Eq.(\ref{eq:Delta_def}), the same error term should arise in the volume component of any quadratic non-linearity. We will spend the rest of this section characterizing the properties of $\Delta_{ijk}(\alpha, \beta)$ for general quadrature rules and use this to show that split form DG schemes are associated with an inherently lower aliasing error in the weakly underresolved range.

\subsection{Projection Structure}
\label{subsec:proj_struct}
To keep the discussion as general as possible, {we refrain from choosing a specific set of interpolation nodes for $\Pi$ and consider instead equipping the latter with the minimal structure required to conduct our analysis. This is formalized through Definition \ref{def:projection}.
\begin{definition}
    We say that a collocation projection $\Pi : L2 \to \mathcal{P}^q$ is valid if the following hold:
    \begin{enumerate}
    \item Quadrature nodes are symmetrically distributed in $\Omega$.
    \item The interpolation nodes associated with $\Pi$ induce a quadrature rule of strength $\tau \geq 2p-1$ on $\Omega$, \textit{ie.}, for $f \in \mathcal{P}^\tau$,
    \begin{equation}
        \int_{\Omega} fd\Omega = \int_\Omega \Pi(f)d\Omega.
    \end{equation}
\end{enumerate}
\label{def:projection}
\end{definition}
Note that by construction, the interpolations associated with the Gauss-Legendre (GL) and Gauss-Legendre-Lobatto (GLL) quadrature rules are valid collocation projections}. Definition \ref{def:projection} significantly constrains the structure of $\Pi$ when expressed in the Legendre basis, as shown in Lemma \ref{lem:proj_struct}. {
\begin{lemma}
    Let $\Pi$ be a valid collocation projection associated with a quadrature rule of strength $\tau$. Then,
    \begin{equation}
        \frac{1}{||L_i||^2}\int_{\Omega} L_i \Pi(L_j) d \Omega
        =
        \delta_{ij},
        \quad
        \text{for } i+j < \tau + 1,
        \label{eq:proj_resolved}
    \end{equation}
    where $\delta_{ij}$ is the Kronecker delta. Additionally, if $i + j$ is odd,
    \begin{equation}
        \frac{1}{||L_i||^2}\int_{\Omega} L_i \Pi(L_j) d \Omega
        =
        0.
        \label{eq:proj_parity}
    \end{equation}
    \label{lem:proj_struct}
\end{lemma}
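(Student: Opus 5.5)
The plan is to treat the two claims separately: both follow from elementary properties of $\Pi$, namely that it reproduces polynomials of degree at most $q$, and that its nodes are symmetric (item 1 of Definition \ref{def:projection}) and exact up to degree $\tau$ (item 2).

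For Eq.(\ref{eq:proj_resolved}), the key observation is that $L_i\Pi(L_j)$ differs from $\Pi(L_iL_j)$ only by something invisible to the quadrature. Since $\Pi$ interpolates at the quadrature nodes, the functions $L_i\Pi(L_j)$ and $L_iL_j$ agree at every node. If $i\le q$, then $L_i\Pi(L_j)\in\mathcal{P}^{i+q}$, which need not be integrated exactly, so I will not use that route directly. I will instead split into two cases.

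\textbf{Case $j\le q$.} Here $\Pi(L_j)=L_j$, so the integral is the exact inner product $\|L_i\|^2\delta_{ij}$.

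\textbf{Case $j>q$.} Since $i+j<\tau+1$ and $\tau\le 2q+1$ for any $(q+1)$-point rule, we get $i<\tau+1-j\le q+1$, so $i\le q$. Then $L_i\in\mathcal{P}^q$, and $\Pi(L_j)\in\mathcal{P}^q$ as well. The product $L_i\Pi(L_j)$ lies in $\mathcal{P}^{2q}$, which may exceed $\tau$, so exactness is not automatic. This is the main obstacle.

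To resolve it, I will use orthogonality of the interpolant. Write $\Pi(L_j)=L_j-r$, where $r$ vanishes at all $q+1$ nodes, so $r=\omega\, s$ with $\omega$ the nodal polynomial of degree $q+1$ and $s$ of degree $j-q-1$. A rule of strength $\tau$ with $q+1$ nodes forces $\omega$ to be orthogonal to $\mathcal{P}^{\tau-q-1}$; indeed, for $g$ of degree at most $\tau-q-1$, the product $\omega g$ has degree at most $\tau$, is integrated exactly, and vanishes at the nodes. Then
\[
\int L_i\Pi(L_j)=\int L_iL_j-\int \omega\, sL_i .
\]
The first term is $0$ because $i\ne j$ (we have $i\le q<j$). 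For the second, $\deg(sL_i)=j-q-1+i\le \tau-q-1$ because $i+j\le\tau$, so it also vanishes. The two cases together give $\delta_{ij}\|L_i\|^2$ throughout.

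For Eq.(\ref{eq:proj_parity}), I will use symmetry. Because the nodes are symmetric, interpolation commutes with the reflection $x\mapsto -x$, so $\Pi$ maps functions of a given parity to functions of the same parity. Since $L_j$ has parity $(-1)^j$, $\Pi(L_j)$ has parity $(-1)^j$. When $i+j$ is odd, $L_i\Pi(L_j)$ is therefore odd and integrates to zero over $\Omega$. The only detail to check is uniqueness of the interpolant, which guarantees that the reflected interpolant equals the interpolant of the reflected function.
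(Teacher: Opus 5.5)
Your proof is correct, and your parity argument for the second identity is the same as the paper's. For the first identity the routes differ somewhat. The paper only asserts that $\int_\Omega L_i\Pi(L_j)\,d\Omega=\int_\Omega L_iL_j\,d\Omega$ follows from $i+j<\tau+1$, implicitly by degree counting against the quadrature (as it does explicitly in the next lemma, where it swaps $\int_\Omega L_i\Pi(L_j)\,d\Omega$ for $\int_\Omega\Pi(L_iL_j)\,d\Omega$). Your case split, the factorization $L_j-\Pi(L_j)=\omega s$ and the orthogonality $\omega\perp\mathcal{P}^{\tau-q-1}$ supply a justification the paper leaves out.

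The ``main obstacle'' you flag is not really there, though. When $j>q$, the hypothesis gives $i\le \tau-j\le\tau-q-1$. Hence $L_i\Pi(L_j)\in\mathcal{P}^{i+q}\subseteq\mathcal{P}^{\tau-1}$, not merely $\mathcal{P}^{2q}$. The route you set aside then works immediately: $L_i\Pi(L_j)$ and $L_iL_j$ agree at the nodes and both have degree at most $\tau$, so
$\int_\Omega L_i\Pi(L_j)\,d\Omega=\int_\Omega \Pi(L_iL_j)\,d\Omega=\int_\Omega L_iL_j\,d\Omega$.
Your nodal-polynomial step is the same fact in different form: $L_i\omega s$ has degree $i+j\le\tau$ and vanishes at the nodes. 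The direct degree count is shorter and matches how the paper argues elsewhere.

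Neither route needs $\tau\le 2q+1$ or $i\neq j$, because $\int_\Omega L_iL_j\,d\Omega=\|L_i\|^2\delta_{ij}$ holds regardless. That step in your argument is harmless but unnecessary.
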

\begin{proof}
    For Eq.(\ref{eq:proj_resolved}), we note that since $i+j < \tau + 1$,
    \begin{equation}
        \frac{1}{||L_i||^2}\int_{\Omega} L_i \Pi(L_j) d \Omega
        =
        \frac{1}{||L_i||^2}\int_{\Omega} L_i L_j d \Omega = \delta_{ij}.
    \end{equation}
    For Eq.(\ref{eq:proj_parity}), we observe that symmetry of interpolation nodes implies that $\Pi$ is parity-preserving. Hence, from the parity of Legendre polynomials, we see that $L_i\Pi(L_j)$ is odd if and only if $i+j$ is odd. The integral of the latter therefore vanishes on the symmetric domain $\Omega$.
\end{proof}
The behaviour described by Lemma \ref{lem:proj_struct} is exemplified in Figure \ref{fig:proj_fig}, where we plot the Legendre components of the image of $\Pi$ for GL and GLL quadrature nodes for the case $p=q=7$. As expected, for $i+j < \tau+1$ the $i$th Legendre component of $\Pi(L_j)$ coincides with the $L2$ projection. For $p < j \leq 2p$, aliasing occurs in a staggered fashion for components described by $i+j \geq \tau+1$. As this staggered pattern is not clearly visible in the GL case because the associated coefficients are small in magnitude, the aliased regime is plotted with tighter colour bounds in Figure \ref{fig:proj_fig_GL_zoom}. In what follows, we will refer to the Legendre components on the diagonal characterized by $i+j = \tau+1$ as the primary aliased band of $\Pi$ while components above this diagonal will be called the secondary aliased modes of $\Pi$.}
\begin{figure}
    \centering
    \begin{subfigure}{0.45\textwidth}
        \centering
        \includegraphics[trim=0cm 10cm 0cm 0cm, clip, width=\linewidth]{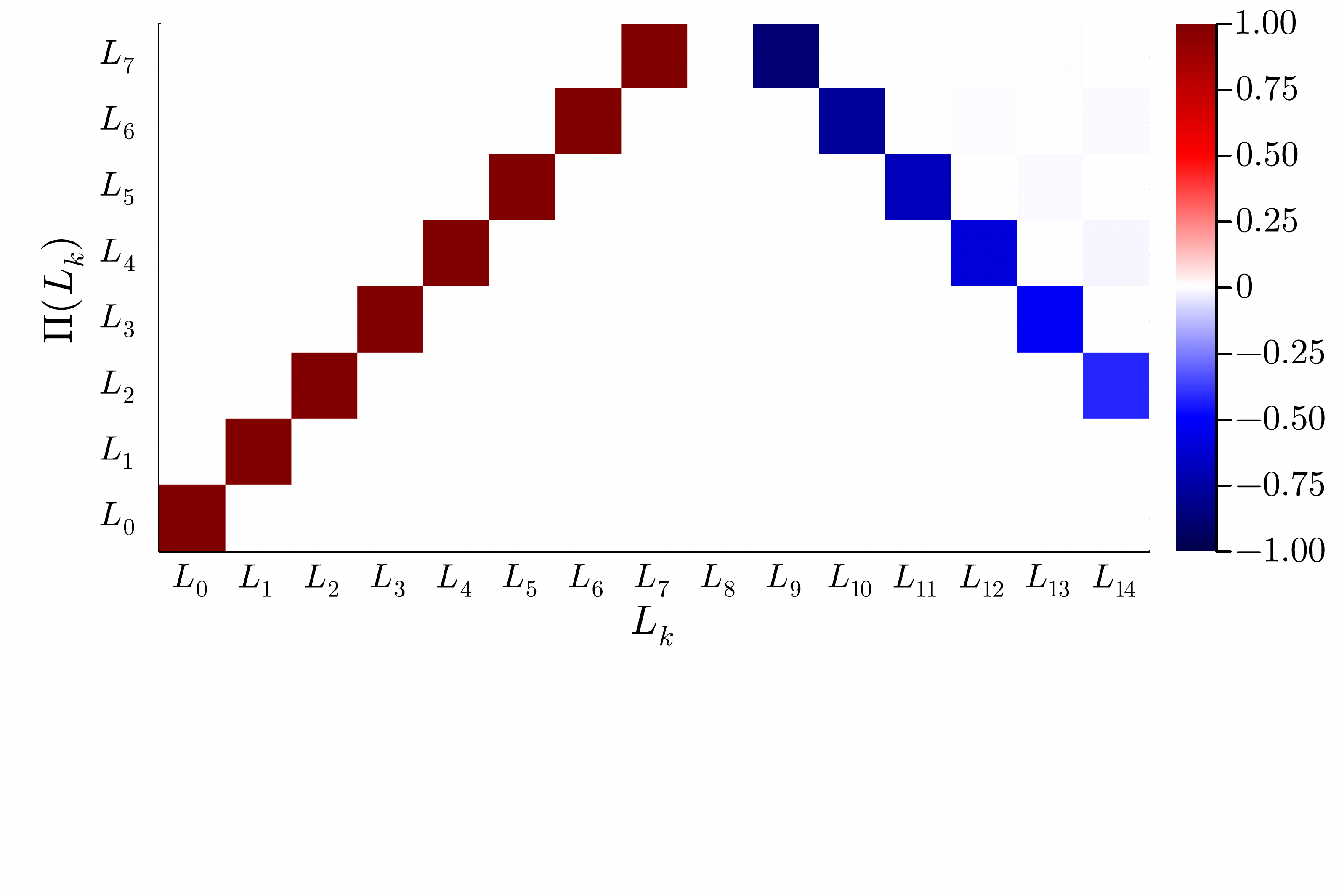}
        \caption{GL Quadrature.}
    \end{subfigure}
    \hfill
    \begin{subfigure}{0.45\textwidth}
        \centering
        \includegraphics[trim=0cm 10cm 0cm 0cm, clip, width=\linewidth]{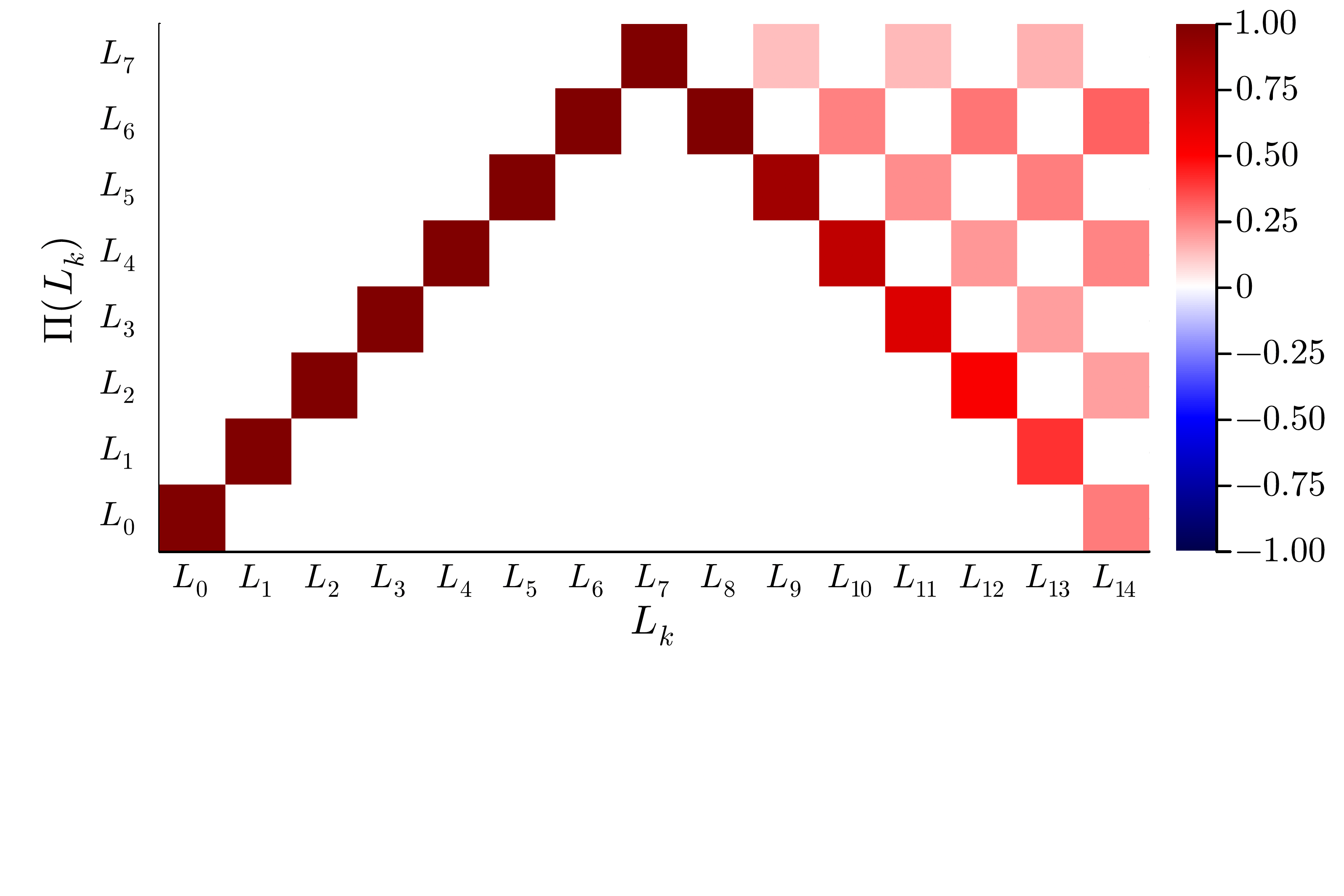}
        \caption{GLL Quadrature.}
    \end{subfigure}
    \caption{Structure of $\Pi$ in the Legendre basis using $(p+1)$ GL and GLL interpolation nodes ($p=q=7$).}
    \label{fig:proj_fig}
\end{figure}
\begin{figure}
    \centering
    \includegraphics[trim=0cm 10cm 0cm 0cm, clip, width=0.5\linewidth]{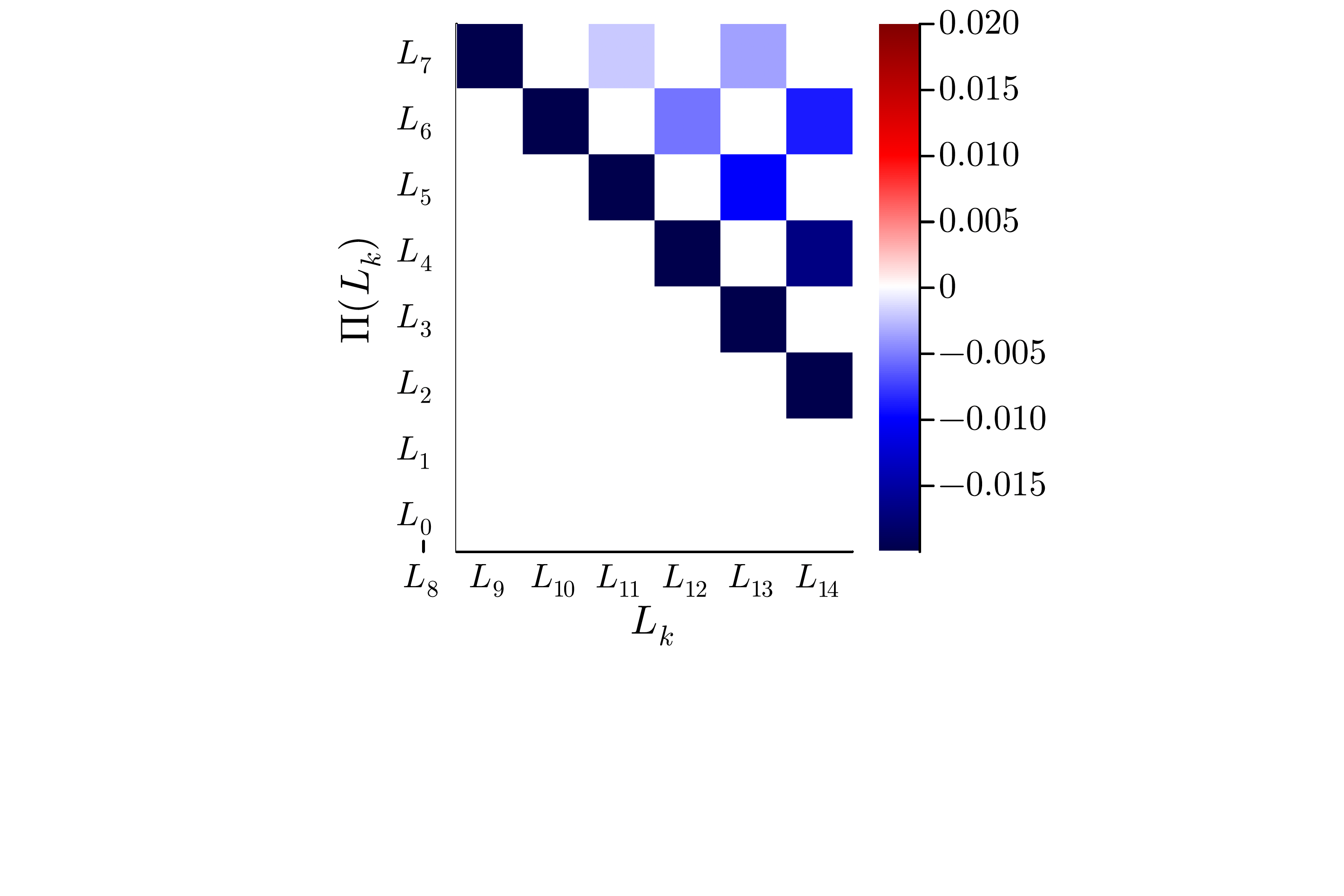}
    \caption{Structure of $\Pi$ in the Legendre basis using $(p+1)$ GL interpolation nodes ($p=q=7$). Colour scheme has been chosen to show the staggered structure of the secondary aliased modes.}
    \label{fig:proj_fig_GL_zoom}
\end{figure}
{As will be seen in the following, full characterization of the primary aliased band of $\Pi$ is necessary to describe the entries of $\Delta_{ijk}(\alpha, \beta)$ governing aliasing errors in the weakly underresolved range. To achieve this, it is convenient to make use of the following definition.
\begin{definition}
    Let $\Pi$ be a valid collocation projection operator with an associated quadrature strength of $\tau$. We define the primary aliasing coefficient of $\Pi$, denoted hereafter as $d_\Pi$ as
    \begin{equation}
    d_{\Pi} := \int_\Omega \Pi(L_{\tau+1}) d\Omega.
    \label{eq:d_Pi}
\end{equation}
\label{def:aliasing_coeff}
\end{definition}
The relevance of Definition \ref{def:aliasing_coeff} is justified by Lemma \ref{lem:proj_diag}, which shows that $d_\Pi$ is in fact sufficient to characterize the coefficient values of Legendre components associated with the primary aliased band of $\Pi$.}
\begin{lemma}
    {Assume that $\Pi$ is a valid collocation projection with an associated quadrature strength of $\tau$ and a primary aliasing coefficient $d_{\Pi}$.} Then, for $i+j=\tau+1$ and $i < p$,
    \begin{equation}
        \frac{1}{||L_i||^2}\int_{\Omega} L_i \Pi(L_j) d\Omega =
        \frac{(2\tau + 3)(2i+1)d_{\Pi}}{2}
        \begin{pmatrix}
            i & j & \tau+1 \\
            0 & 0 & 0
        \end{pmatrix}^2.
    \end{equation}
    \label{lem:proj_diag}
\end{lemma}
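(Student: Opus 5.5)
Since $i<p$ and $j=\tau+1-i$, we have $i+j=\tau+1$, so $\deg(L_iL_j)=\tau+1$ and $L_i L_j$ is just one degree beyond the exactness range of the quadrature. The plan is to reduce everything to the single aliased polynomial $L_{\tau+1}$, whose quadrature defect is by definition $d_\Pi$.

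First, we rewrite the integral through the quadrature itself. Let $\{x_n,w_n\}$ be the interpolation nodes and induced weights; since the rule has strength $\tau\ge 2p-1$ and $\Pi$ interpolates at these nodes, $\int_\Omega L_i\Pi(L_j)\,d\Omega$ equals the discrete sum $\sum_n w_n L_i(x_n)L_j(x_n)$ provided $\deg(L_i\Pi(L_j))\le\tau$. The interpolant has degree $q$, so this needs $i+q\le\tau$. For GL with $q=p$ and $\tau=2p+1$, and for GLL with $q=p$ and $\tau=2p-1$, this holds exactly when $i<p$. This is the role of that hypothesis, and it is the step I expect to be the delicate one. Applying the same reasoning to $L_{\tau+1}$ gives $d_\Pi=\int_\Omega\Pi(L_{\tau+1})\,d\Omega=\sum_n w_nL_{\tau+1}(x_n)$, since $\Pi(L_{\tau+1})\in\mathcal P^q$ with $q\le\tau$ is integrated exactly.

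Next, we expand the product in the Legendre basis using the linearization (Adams–Neumann) formula
\[
L_iL_j=\sum_{m}(2m+1)\begin{pmatrix} i & j & m\\ 0&0&0\end{pmatrix}^2 L_m ,
\]
where $m$ ranges over $|i-j|\le m\le \tau+1$ with $i+j+m$ even. Applying the discrete sum term by term, every $L_m$ with $1\le m\le\tau$ has discrete integral equal to its exact integral, namely $0$. The $m=0$ term cannot occur unless $i=j$, and then $i=j=(\tau+1)/2\ge p$, contradicting $i<p$; so the $m=0$ contribution vanishes as well. The only surviving term is $m=\tau+1$, which contributes $(2\tau+3)\begin{pmatrix} i & j & \tau+1\\ 0&0&0\end{pmatrix}^2 d_\Pi$.

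Finally, we divide by $\|L_i\|^2=2/(2i+1)$, which gives the stated formula. The main obstacle is the first step: justifying that $\int L_i\Pi(L_j)$ coincides with the discrete sum requires degree bookkeeping for $i+q\le\tau$. If that bound fails for a general valid $\Pi$, I would instead argue directly. Write $\Pi(L_j)=L_j-(L_j-\Pi L_j)$, project onto $L_i$, and use that $L_j-\Pi(L_j)$ vanishes at the nodes. Then express $L_i(L_j-\Pi L_j)$ as a multiple of the nodal polynomial, whose integral against low-degree polynomials is controlled by the same defect $d_\Pi$.
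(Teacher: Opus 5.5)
Your proof is essentially the paper's: both replace $\int_\Omega L_i\Pi(L_j)\,d\Omega$ by $\int_\Omega \Pi(L_iL_j)\,d\Omega$ via a degree count, expand $L_iL_j$ with the Legendre linearization formula, and keep only the $L_{\tau+1}$ mode, whose projected integral is $d_\Pi$. The paper's justification ``since $\tau\geq 2p-1$ and $i<p$'' tacitly uses $\deg \Pi(L_j)\leq p$, i.e.\ $q=p$; your bound $i+q\leq\tau$ makes this explicit, and your check that no $L_0$ term appears (because $i\neq j$) fills in a step the paper leaves implicit.

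The closing fallback should be dropped, because without $i+q\leq\tau$ the identity is false in general. Take four equispaced nodes with $p=2$, $\tau=q=3$, $i=1$, $j=3$: then $\Pi(L_3)=L_3$, so the left side vanishes, while $d_\Pi=14/27$ makes the right side $4/9$. The degree condition is therefore a genuine hypothesis, not something that can be argued around.
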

\begin{proof}
Since $\tau \geq 2p-1$ and $i < p$,
\begin{equation}
    \frac{1}{||L_i||^2}\int_{\Omega} L_i \Pi(L_j) d\Omega = \frac{1}{||L_i||^2}\int_{\Omega} \Pi(L_iL_j) d\Omega.
        \label{eq:lem_proj1}
\end{equation}
Using the product identity for Legendre polynomials, one obtains
\begin{equation}
    L_i L_j = (2\tau + 3)
    \begin{pmatrix}
        i & j & \tau+1 \\
        0 & 0 & 0
    \end{pmatrix}^2L_{\tau+1}
    +
    \sum_{r=|i-j|}^\tau
    (2r+1)
    \begin{pmatrix}
        i & j & r \\
        0 & 0 & 0
    \end{pmatrix}^2L_{r}.
\end{equation}
Using properties of $\Pi$, integrating over $\Omega$ and leveraging orthogonality of Legendre polynomials yields
\begin{equation}
    \int_{\Omega} \Pi(L_i L_j)d\Omega =
    (2\tau + 3)
    \begin{pmatrix}
        i & j & \tau+1 \\
        0 & 0 & 0
    \end{pmatrix}^2
    d_{\Pi}
    \label{eq:lem_proj2}.
\end{equation}
Combining Eq.(\ref{eq:lem_proj1}) and Eq.(\ref{eq:lem_proj2}) completes the proof.
\end{proof}
It should be noted that although this is not strictly required for the argument that follows, values of $d_{\Pi}$ can be computed analytically for a projection at the $(p+1)$ GL and GLL nodes by leveraging the kernel of the associated operator. In particular, in these special cases, as shown in \ref{app2}, one finds
\begin{align}
    d_{\Pi}^{GL} &=
    \int_{\Omega} \Pi(L_{2p+2})d\Omega
    =
    -\frac{2}{2p+3} \frac{
    \begin{pmatrix}
        4p+4 \\ 2p+2
    \end{pmatrix}
    }{
    \begin{pmatrix}
        2p+2 \\ p+1
    \end{pmatrix}^2
    }, \\
    d_{\Pi}^{GLL} &=
    \int_{\Omega} \Pi(L_{2p})d\Omega
    =
    \frac{2}{2p-1}
    \frac{
    \begin{pmatrix}
        4p \\ 2p
    \end{pmatrix}
    }
    {
    \begin{pmatrix}
        2p+2 \\ p+1
    \end{pmatrix}
    \begin{pmatrix}
        2p-2 \\ p-1
    \end{pmatrix}
    }.
\end{align}

\subsection{Aliasing Error}
\label{subsec:aliasing_error}
Having characterized the properties of the relevant collocation projection operators, we can now tackle the derivation of the aliasing error tensor $\Delta_{ijk}(\alpha, \beta)$. As a result of the properties of $\Pi$ when expressed in the Legendre basis, $\Delta_{ijk}$ inherits a parity-consistent hierarchical structure. More precisely, $\Delta_{ijk}(\alpha, \beta)$ vanishes when $i+j+k$ is even, and error modes are progressively excited as $i+j$ is increased from $\tau-p+2$ to $2p$.

In the following, we derive an expression for the error on the lowest-order Legendre mode, \textit{ie.}, for every pair $(i,j)$, we find the value of $\Delta_{ijk}(\alpha, \beta)$ for the smallest $k$ that results in a non-vanishing aliasing error. For the cases $i+j=\tau-p+2$ and $i+j=\tau-p+3$, since the aliasing error only manifests itself through a single Legendre mode, this amounts to a complete characterization. Following the same argument as \cite{blaisdell1996effect}, one can observe that in the weakly underresolved range, the error on the lowest-order product $L_i L_j$ will dominate. Hence, aliasing error reduction of a given quadratic split can be assessed by studying $\Delta_{ijk}(\alpha, \beta)$ when $i+j=\tau-p+2$.

We begin by deriving an expression for $\tilde{\delta}_{lmn}$ as shown in Lemma \ref{lem:tdelta}.
\begin{lemma}
    {Assume that $\Pi$ is a valid collocation projection with an associated quadrature strength of $\tau$ and a primary aliasing coefficient $d_{\Pi}$.} Then, if $0 \leq l,m,n \leq p$ and $l+m+n=\tau+2$,
    \begin{equation}
        \tilde{\delta}_{lmn} =
        \frac{d_{\Pi}l}{2}
        \frac{
        \begin{pmatrix}
            2l \\ l
        \end{pmatrix}
        \begin{pmatrix}
            2m \\ m
        \end{pmatrix}
        \begin{pmatrix}
            2n \\ n
        \end{pmatrix}
        }
        {
        \begin{pmatrix}
            2\tau +2 \\ \tau+1
        \end{pmatrix}
        },
        \label{eq:tdelta_binomial}
    \end{equation}
    where $\tilde{\delta}_{lmn}$ is defined as per Eq.(\ref{eq:tdelta_def}).
    \label{lem:tdelta}
\end{lemma}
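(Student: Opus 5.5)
The plan is to reduce $\tilde{\delta}_{lmn}$ to a single quadrature error on the degree-$(\tau+1)$ polynomial $\frac{dL_l}{dx}L_mL_n$, and then evaluate that error with Lemma \ref{lem:proj_diag} and a closed form for the top Legendre coefficient.

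\emph{Step 1: reduce to a quadrature error.} For $l,m,n \leq p$ the quadrature of strength $\tau \geq 2p-1$ integrates $\frac{dL_l}{dx}\Pi(L_mL_n)$ and $\frac{dL_l}{dx}L_mL_n$ exactly as the quadrature sum, since both agree at the nodes. Hence
\begin{equation*}
\tilde{\delta}_{lmn} = \int_\Omega \Pi\!\left(\frac{dL_l}{dx}L_mL_n\right)d\Omega - \int_\Omega \frac{dL_l}{dx}L_mL_n\, d\Omega .
\end{equation*}
To justify the first integral, I would use that $\frac{dL_l}{dx}\Pi(L_mL_n)$ and $\Pi(\frac{dL_l}{dx}L_mL_n)$ have the same nodal values. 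I still need to check that the former has degree at most $\tau$. The degree is $l-1+q$, which may exceed $\tau$, so I will phrase the argument directly via the quadrature sum: $\int_\Omega \Pi(f)\,d\Omega = \sum_k w_k f(x_k)$ holds exactly because $\Pi(f)\in\mathcal{P}^q$ with $q\le\tau$.

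\emph{Step 2: isolate the top Legendre mode.} The polynomial $g = \frac{dL_l}{dx}L_mL_n$ has degree $l+m+n-1 = \tau+1$. Write $g = c\,L_{\tau+1} + r$ with $\deg r \leq \tau$. The quadrature is exact on $r$, and $\int_\Omega L_{\tau+1}\,d\Omega = 0$ since $\tau+1\geq 1$. Therefore $\tilde{\delta}_{lmn} = c\, d_\Pi$ by Definition \ref{def:aliasing_coeff}.

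\emph{Step 3: compute $c$.} This is pure leading-coefficient bookkeeping. The leading coefficient of $L_k$ is $\kappa_k = \binom{2k}{k}/2^k$, so the leading coefficient of $\frac{dL_l}{dx}$ is $l\kappa_l$. Hence
\begin{equation*}
c = \frac{l\,\kappa_l\kappa_m\kappa_n}{\kappa_{\tau+1}} = l\,\frac{\binom{2l}{l}\binom{2m}{m}\binom{2n}{n}}{\binom{2\tau+2}{\tau+1}}\, 2^{\tau+1-l-m-n}.
\end{equation*}
Using $l+m+n=\tau+2$, the power of two is $2^{-1}$, which gives exactly Eq.(\ref{eq:tdelta_binomial}).

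\emph{Main obstacle.} The only delicate point is Step 1, namely making the equality between the integral of $\frac{dL_l}{dx}\Pi(L_mL_n)$ and the quadrature sum rigorous. Its degree is $l-1+q$, so exactness requires $l-1+q\le\tau$, which is not automatic for general $q$. I would first try to verify this for the GL/GLL settings ($q=p$, $\tau=2p+1$ or $2p-1$), where $l-1+p\le 2p-1\le\tau$ holds. For general valid $\Pi$, I would note that the paper implicitly identifies the discrete volume terms with quadrature sums (the FD equivalence of Appendix \ref{app1}) and adopt that interpretation, under which the step holds by definition.
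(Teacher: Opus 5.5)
Your proof is correct, and it takes a genuinely different, more economical route than the paper.

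\textbf{How the paper argues.} The paper expands $L_mL_n$ and $\frac{dL_l}{dx}$ in Legendre series. It uses Lemma~\ref{lem:proj_struct} to discard every mode pair except $(L_{l-1},L_{m+n})$. It evaluates that pair with Lemma~\ref{lem:proj_diag}, which is a second Legendre product expansion projected onto $L_{\tau+1}$. It then simplifies two Wigner $3j$ symbols with Lemma~\ref{lem:Wigner1} and the shift $(2l-1)\binom{2l-2}{l-1}=\frac{l}{2}\binom{2l}{l}$.

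\textbf{How your route differs.} You treat $\tilde{\delta}_{lmn}$ as the quadrature error of the single polynomial $g=\frac{dL_l}{dx}L_mL_n$, which has degree exactly $\tau+1$. Linearity and strength $\tau$ reduce this error to $c\,d_\Pi$, where $c$ is the $L_{\tau+1}$-coefficient of $g$, a ratio of leading coefficients. The factor $l/2$ then comes straight from $2^{\tau+1-(l+m+n)}$.

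\textbf{What each route buys.} Yours bypasses parity, Lemmas~\ref{lem:proj_struct} and~\ref{lem:proj_diag}, and the $3j$ algebra entirely. Lemma~\ref{lem:Wigner1} is exactly the statement that the $L_{m+n}$-coefficient of $L_mL_n$ is $\kappa_m\kappa_n/\kappa_{m+n}$, with $\kappa_k=\binom{2k}{k}/2^k$. So the paper is computing your leading-coefficient ratios in disguise, and the same argument reproduces Lemma~\ref{lem:proj_diag} in one line. The paper's route, in exchange, explicitly identifies the single mode pair that lands on the primary aliased band of $\Pi$, which it uses in its discussion of the structure of $\Delta_{ijk}$.

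\textbf{The condition you flag in Step~1.} The requirement $l-1+q\le\tau$ is a genuine hypothesis, not an artifact of your method. The paper's proof needs the same thing tacitly. Lemma~\ref{lem:proj_diag} replaces $\int_\Omega L_i\Pi(L_j)\,d\Omega$ by $\int_\Omega \Pi(L_iL_j)\,d\Omega$ on the grounds that $\tau\ge 2p-1$ and $i<p$. That step silently requires $i+q\le\tau$, for example $q\le p$.

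Read literally, with $\Pi$ the interpolant onto $\mathcal{P}^q$ for large $q$, the statement can fail. Take $p=2$ with four equispaced nodes, so $q=\tau=3$ and the projection is valid. Then $\Pi(L_1L_2)=L_1L_2$, hence $\tilde{\delta}_{212}=0$. The formula instead gives $\frac{36}{35}d_\Pi$, with $d_\Pi=\frac{14}{27}\neq 0$.

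So your resolution is the right one: check that $q=p$ suffices for GL/GLL, and otherwise read the volume integrals as quadrature sums, as the flux-differencing form in \ref{app1} does.

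\textbf{A wording fix.} At the start of Step~1, the quadrature does not integrate $\frac{dL_l}{dx}L_mL_n$ exactly, since it has degree $\tau+1$. What you actually use is that $\frac{dL_l}{dx}\Pi(L_mL_n)$ and $\frac{dL_l}{dx}L_mL_n$ have the same quadrature sum. Rephrasing ``via the quadrature sum'' does not by itself remove the degree condition, as you yourself note.
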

\begin{proof}
    Using the product identity for Legendre polynomials,
    \begin{equation}
    L_m L_n = (2m+2n + 1)
    \begin{pmatrix}
        n & m & m+n \\
        0 & 0 & 0
    \end{pmatrix}^2L_{m+n}
    +
    \sum_{r=|m-n|}^{m+n-1}
    (2r+1)
    \begin{pmatrix}
        m & n & r \\
        0 & 0 & 0
    \end{pmatrix}^2L_{r}.
\end{equation}
Additionally, the derivative of $L_l$ is given by
\begin{equation}
    \frac{dL_l}{dx} = \frac{2}{||L_{l-1}||^2}L_{l-1} + \frac{2}{||L_{l-3}||^2}L_{l-3} + ...
\end{equation}
Since $l + m + n = \tau+2$, aliasing errors will only occur for the integration of the $(m+n)$th Legendre mode of the product $L_mL_n$ and the $(l-1)$th mode of the derivative of $L_{l-1}$. Thus,
\begin{align}
    \tilde{\delta}_{lmn} = \int_\Omega \frac{d L_l}{dx} \Pi(L_m L_n)d\Omega - \int_\Omega \frac{d L_l}{dx} L_m L_nd\Omega
    =
    (2m+2n+1)
    \begin{pmatrix}
        n & m & m+n \\
        0 & 0 & 0
    \end{pmatrix}^2
    \frac{2}{||L_{l-1}||^2}
    \int_{\Omega} L_{l-1} \Pi(L_{m+n})d\Omega.
\end{align}
Using Lemma \ref{lem:proj_diag}, we find
\begin{equation}
\tilde{\delta}_{lmn}
=
    {(2\tau + 3)(2l-1)(2m+2n+1)d_{\Pi}}
        \begin{pmatrix}
            l-1 & m+n & \tau+1 \\
            0 & 0 & 0
        \end{pmatrix}^2
        \begin{pmatrix}
            m & n & m+n \\
            0 & 0 & 0
        \end{pmatrix}^2.
\end{equation}
Upon simplification of the Wigner 3$j$ symbols via Lemma \ref{lem:Wigner1} (\ref{app3}), {one obtains
\begin{equation}
    \tilde{\delta}_{lmn} = (2l-1)d_\Pi
    \frac{
    \begin{pmatrix}
        2l-2\\ l-1
    \end{pmatrix}
    \begin{pmatrix}
        2m \\ m
    \end{pmatrix}
    \begin{pmatrix}
        2n \\ n
    \end{pmatrix}
    }{
    \begin{pmatrix}
        2\tau+2 \\ \tau+1
    \end{pmatrix}
    }
    =
    \frac{d_{\Pi}l}{2}
        \frac{
        \begin{pmatrix}
            2l \\ l
        \end{pmatrix}
        \begin{pmatrix}
            2m \\ m
        \end{pmatrix}
        \begin{pmatrix}
            2n \\ n
        \end{pmatrix}
        }
        {
        \begin{pmatrix}
            2\tau +2 \\ \tau+1
        \end{pmatrix}
        },
\end{equation}
where the shift identity for the central binomial coefficient has been used in the final step.}
\end{proof}
Leveraging Lemma \ref{lem:tdelta}, the sought expression for $\Delta_{ijk}(\alpha, \beta)$ can be determined.
\begin{theorem}
    {Assume that $\Pi$ is a valid collocation projection with an associated quadrature strength of $\tau$ and a primary aliasing coefficient $d_{\Pi}$.} Then, if $0 \leq i,j,k \leq p$ and $i+j+k=\tau+2$,
    \begin{equation}
        \Delta_{ijk}(\alpha, \beta)
        =
        \frac{d_{\Pi}}{2}
        \frac{
        \begin{pmatrix}
            2i \\ i
        \end{pmatrix}
        \begin{pmatrix}
            2j \\ j
        \end{pmatrix}
        \begin{pmatrix}
            2k \\ k
        \end{pmatrix}
        }
        {
        \begin{pmatrix}
            2\tau+2 \\ \tau+1
        \end{pmatrix}
        }
        (\beta(i+j) - \alpha(\tau+2-(i+j))).
    \end{equation}
    \label{theo:Delta}
\end{theorem}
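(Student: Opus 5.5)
The plan is to combine the decomposition of Eq.(\ref{eq:Delta_components}) with Lemma \ref{lem:tdelta}. Since $\Delta_{ijk}(\alpha,\beta) = \beta\tilde{\delta}_{ijk} + \beta\tilde{\delta}_{jki} - \alpha\tilde{\delta}_{kij}$, it suffices to evaluate each of the three $\tilde{\delta}$ terms. Each one has index triple $(i,j,k)$ up to cyclic permutation, so every entry lies in $[0,p]$ and the index sum is $i+j+k=\tau+2$. The hypotheses of Lemma \ref{lem:tdelta} therefore hold for all three terms.

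The key observation is that the formula of Lemma \ref{lem:tdelta} is symmetric in the three central binomial coefficients and depends on the ordering of the indices only through the prefactor $l$, the first index (the one carrying the derivative). Writing
\begin{equation*}
C := \frac{d_\Pi}{2}\frac{\binom{2i}{i}\binom{2j}{j}\binom{2k}{k}}{\binom{2\tau+2}{\tau+1}},
\end{equation*}
we obtain $\tilde{\delta}_{ijk} = iC$, $\tilde{\delta}_{jki} = jC$ and $\tilde{\delta}_{kij} = kC$. Substituting these gives
\begin{equation*}
\Delta_{ijk}(\alpha,\beta) = C\,(\beta(i+j) - \alpha k).
\end{equation*}
The index constraint gives $k = \tau+2-(i+j)$, and replacing $k$ accordingly yields exactly the claimed expression.

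The only point needing care is an edge case in Lemma \ref{lem:tdelta} when the derivative index is $0$. For example, if $i=0$ then $dL_0/dx = 0$, so $\tilde{\delta}_{0jk}=0$ directly from the definition (\ref{eq:tdelta_def}). This agrees with the formula, which carries the factor $l=0$. I would remark on this so that the lemma's proof, which uses $L_{l-1}$, is not applied to an undefined mode. With that check the argument is purely algebraic, and I expect no substantive obstacle.
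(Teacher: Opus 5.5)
Your proposal is correct and is exactly the paper's argument: substitute Lemma~\ref{lem:tdelta} into Eq.~(\ref{eq:Delta_components}), read off $\tilde{\delta}_{ijk}=iC$, $\tilde{\delta}_{jki}=jC$, $\tilde{\delta}_{kij}=kC$, and use $k=\tau+2-(i+j)$. Your $l=0$ caveat is harmless but vacuous: $i+j+k=\tau+2\geq 2p+1$ with every index at most $p$ forces each index to be at least $\tau+2-2p\geq 1$.
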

\begin{proof}
    The proof follows from direct application of Lemma \ref{lem:tdelta} and the definition of $\Delta_{ijk}(\alpha,\beta)$.
\end{proof}
From Theorem \ref{theo:Delta}, it can be observed that, similar to the behaviour described by \cite{blaisdell1996effect} for spectral methods, the aliasing error for the conservative ($\alpha =1$, $\beta=0$) and non-conservative ($\alpha=0$, $\beta=1$) DG discretizations are opposite in sign. Therefore, split form DG schemes decrease aliasing errors in the weakly underresolved range via a mechanism that is qualitatively similar to that previously observed for split form spectral discretizations. These dealiasing mechanisms are, however, not mathematically identical. In particular, for spectral methods, it can be shown \cite{blaisdell1996effect} that the error on the lowest-order aliased mode vanishes for the average split ($\alpha=1/2$, $\beta=1/2$), hence rendering this split optimal for dealiasing purposes. This is not the case for DG methods, as shown by the following corollary.
\begin{corollary}
    {Assume that $\Pi$ is a valid collocation projection with an associated quadrature strength of $\tau$ and a primary aliasing coefficient $d_{\Pi}$.} If $0 \leq k \leq p$ and $i+j=\tau-p+2$
    \begin{equation}
        \Delta_{ijk}(\alpha, \beta) = 0 \quad \text{for} \quad
        \alpha = \frac{\tau-p+2}{\tau+2}, \quad
        \beta = \frac{p}{\tau+2}.
    \end{equation}
    In particular for the $(p+1)$ GL nodes, this will occur when
    \begin{equation}
        \alpha = \frac{p+3}{2p+3}, \quad
        \beta = \frac{p}{2p+3},
    \end{equation}
    while for the $(p+1)$ GLL nodes, one has
    \begin{equation}
        \alpha = \frac{p+1}{2p+1}, \quad
        \beta = \frac{p}{2p+1}.
    \end{equation}
    \label{corr:optimal_splits}
\end{corollary}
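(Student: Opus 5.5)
The plan is to reduce the corollary to Theorem \ref{theo:Delta} by restricting to the index range where the theorem applies, and then solve a linear equation in $(\alpha,\beta)$. Fix $i,j$ with $i+j=\tau-p+2$ and $0\le k\le p$. The entry of interest is the lowest non-vanishing mode, $k=p$, for which $i+j+k=\tau+2$ holds exactly. For this index, Theorem \ref{theo:Delta} gives
\begin{equation*}
\Delta_{ijk}(\alpha,\beta)=\frac{d_\Pi}{2}\frac{\binom{2i}{i}\binom{2j}{j}\binom{2k}{k}}{\binom{2\tau+2}{\tau+1}}\bigl(\beta(\tau-p+2)-\alpha p\bigr).
\end{equation*}

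For the remaining $k<p$ we have $i+j+k<\tau+2$, and I would argue that $\Delta_{ijk}$ vanishes identically for every $(\alpha,\beta)$. Each term $\tilde\delta$ in Eq.(\ref{eq:Delta_components}) involves $\frac{dL_l}{dx}\,L_mL_n$, which is a polynomial of degree $l+m+n-1\le\tau$. Because the quadrature has strength $\tau$, this integrand is integrated exactly: the interpolated integrand $\frac{dL_l}{dx}\Pi(L_mL_n)$ agrees with $\frac{dL_l}{dx}L_mL_n$ at the nodes. This is the step that needs care, namely the identity $\int \frac{dL_l}{dx}\Pi(L_mL_n)\,d\Omega=\int\Pi\bigl(\frac{dL_l}{dx}L_mL_n\bigr)\,d\Omega$. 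It holds because the product $\frac{dL_l}{dx}\Pi(L_mL_n)$ has degree at most $(l-1)+q\le\tau$ for the GL and GLL settings (with $q=p$ and $\tau\ge 2p-1$), so that quadrature is exact on it as well. With this, $\tilde\delta_{lmn}=0$ and hence $\Delta_{ijk}=0$. Any $k$ with $i+j+k$ even also vanishes by parity, as noted before Lemma \ref{lem:tdelta}.

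The prefactor $\frac{d_\Pi}{2}\binom{2i}{i}\binom{2j}{j}\binom{2k}{k}/\binom{2\tau+2}{\tau+1}$ is independent of the splitting, so it suffices to make the bracket vanish:
\begin{equation*}
\beta(\tau-p+2)=\alpha p, \qquad \alpha+\beta=1 .
\end{equation*}
Substituting $\beta=1-\alpha$ gives $\tau-p+2=\alpha(\tau+2)$. This yields $\alpha=(\tau-p+2)/(\tau+2)$ and $\beta=p/(\tau+2)$.

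Finally, I specialise the strength. For $(p+1)$ GL nodes, $\tau=2p+1$, which gives $\alpha=(p+3)/(2p+3)$ and $\beta=p/(2p+3)$. For GLL nodes, $\tau=2p-1$, which gives $\alpha=(p+1)/(2p+1)$ and $\beta=p/(2p+1)$.

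The main obstacle is justifying that the entries with $k<p$ vanish. If the degree-counting argument above is too loose, I would fall back on interpreting the corollary as concerning the lowest-order aliased entry $k=\tau+2-(i+j)=p$, which is exactly the case Theorem \ref{theo:Delta} covers.
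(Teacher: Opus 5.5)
Your proof is correct and follows the paper's route: apply Theorem~\ref{theo:Delta} at $k=p$, where $i+j+k=\tau+2$; solve $\beta(\tau-p+2)=\alpha p$ together with $\alpha+\beta=1$; and substitute $\tau=2p+1$ (GL) or $\tau=2p-1$ (GLL). Your extra argument that the entries with $k<p$ vanish identically is a useful addition the paper leaves implicit, and the restriction $q=p$ there is unnecessary: either $m+n\le q$, so that $\Pi(L_mL_n)=L_mL_n$, or $(l-1)+q<l+m+n-1\le\tau$.
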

\begin{proof}
    The proof follows directly by application of Theorem \ref{theo:Delta} and the fact that $\tau = 2p+1$ for the GL quadrature and $\tau = 2p-1$ for the GLL quadrature.
\end{proof}
{We can expect the split values introduced in Corollary \ref{corr:optimal_splits} to minimize the volumetric aliasing errors caused by quadratic non-linearities provided that the numerical scheme operates in the well-resolved range, \textit{ie.}, most of the energy associated with the aliased modes is contained in the Legendre flux component of order $i+j=\tau-p+2$. Indeed, by construction, these split coefficients eliminate volumetric aliasing errors on the product $L_iL_j$ for $i+j = \tau-p+2$. Aliasing errors will, however, still be present for higher-order products. Hence, if the mesh used is too coarse to represent adequately solution features, the dealiased split values from Corollary \ref{corr:optimal_splits} will not minimize aliasing error. It should also be stressed that Corollary \ref{corr:optimal_splits} focuses {only on volumetric aliasing errors as part of the scheme's total truncation error.} This is discussed in more details in sections \ref{sec:T_integration} and \ref{sec:inexact_mass}.}

\subsection{Connection with Burgers Entropy Stability}
\label{subsec:E_stab}
We complete our theoretical analysis of split form discretizations by relating and comparing the analysis conducted to the entropy stable split form DG scheme ($\alpha = 2/3$, $\beta=1/3$) for the Burgers equation introduced by \cite{gassner2013skew}. Returning to Eq.(\ref{eq:FD_continuous}) and Eq.(\ref{eq:Delta_def}), we can observe that, in the Legendre basis, split form DG discretizations for the one-dimensional Burgers equation can be written as
\begin{align}
    \int_\Omega \frac{d}{dt}\Pi(L_k u) d\Omega
    - \frac{1}{2}\int_\Omega \frac{d L_k}{dx} u^2 d\Omega
    + \frac{1}{2}\sum_{i,j=0}^pu_iu_j \Delta_{ijk}(\alpha, \beta)
    + \int_{\Gamma} L_k f^* \hat{n} d\Gamma
    = 0.
\end{align}
The associated discrete entropy equation is obtained by using $u$ as the polynomial test function,
\begin{equation}
    \frac{d}{dt}\frac{1}{2}\sum_{k=0} u_k^2 \int_{\Omega}\Pi(L_k^2)d\Omega
    - \frac{1}{2}\int_\Omega \frac{d u}{dx} u^2 d\Omega
    + \frac{1}{2}\sum_{i,j,k=0}^p u_iu_ju_k \Delta_{ijk}(\alpha, \beta)
    + \int_{\Gamma} uf^* \hat{n} d\Gamma
    = 0.
\end{equation}
Applying integration by parts, this is equivalent to
\begin{align}
    \frac{d}{dt}\frac{1}{2}\sum_{k=0} u_k^2 \int_{\Omega}\Pi(L_k^2)d\Omega
    - \int_{\Gamma} \left(\frac{1}{6}u^3 - u f^*\right)\hat{n} d\Gamma
    &= -\frac{1}{2}\sum_{i,j,k=0}^p u_iu_ju_k \Delta_{ijk}(\alpha, \beta) \\
    &= -\frac{1}{6}\sum_{i,j,k=0}^p u_iu_ju_k (\Delta_{ijk}(\alpha, \beta) + \Delta_{kij}(\alpha, \beta) + \Delta_{kji}(\alpha, \beta)),
    \label{eq:Estab_Delta}
\end{align}
where symmetry has been used in the final step. For local entropy stability (assuming that $f^*$ is entropy stable in the sense of Tadmor \cite{tadmor1987numerical}), the right-hand side of Eq.(\ref{eq:Estab_Delta}) must vanish no matter the value of $u_i u_j u_k$. By definition, we note that
\begin{equation}
    \Delta_{ijk} + \Delta_{kij} + \Delta_{kji}
    =
    (1-3\beta)\int_\Gamma L_iL_jL_k\hat{n}d\Gamma
    + (2\beta-\alpha)\int_\Omega\left(
    \frac{d L_i}{dx} \Pi(L_j L_k)
    +
    \frac{d L_j}{dx} \Pi(L_i L_k)+\frac{d L_k}{dx} \Pi(L_i L_j)
    \right)d\Omega.
\end{equation}
Thus, the tensor $\Delta_{ijk}(\alpha,\beta)$ will satisfy the skew-symmetry relation
\begin{equation}
    \Delta_{ijk} + \Delta_{kij} + \Delta_{kji} = 0
\end{equation}
if and only if
\begin{equation}
    1-3\beta = 0 \quad \text{and} \quad 2\beta - \alpha = 0
    \iff
    \alpha = 2/3 \quad \text{and} \quad \beta = 1/3,
\end{equation}
in which case the DG scheme will be rendered entropy stable. Hence, we can see that while the split coefficients from Corollary \ref{corr:optimal_splits} ensure that the aliasing error is minimized in the well-resolved range by eliminating the error on the lowest order aliased modes, the choice $\alpha = 2/3$, $\beta = 1/3$ guarantees that aliasing errors do not contaminate the total entropy of the solution by forcing their sum over all modes to vanish. Alternatively, we can say that the choice of splitting coefficients affects the structural properties of the error tensor $\Delta_{ijk}(\alpha, \beta)$; using the split coefficients from Corollary \ref{corr:optimal_splits} requires the diagonal characterized by $i+j+k=\tau+2$ to vanish and using $\alpha = 2/3$, $\beta = 1/3$ renders the error tensor skew-symmetric.

\subsection{Impact of Temporal Integration}
\label{sec:T_integration}
{While, as mentioned previously, the split values introduced in Corollary \ref{corr:optimal_splits} will minimize volumetric aliasing errors associated with the split form semi-discretization, this does not denote that they will minimize the global truncation error on the numerical solution when the scheme is integrated in time. {Indeed, for the one-dimensional Burgers problem, for sufficiently large integration times, there is no theoretical guarantee preventing aliasing errors on the higher-order Legendre products from triggering unbounded growth of the solution's energy.} As discussed in the previous section, this issue is specifically handled by the entropy stable split ($\alpha = 2/3$, $\beta= 1/3$), which equips $\Delta_{ijk}(\alpha, \beta)$ with the structural properties required to bound the numerical solution's energy when the discretization is endowed with an entropy stable surface numerical flux. It is worth noting that although the entropy stable split is not constructed to eliminate aliasing errors on low-order product terms, it will nevertheless achieve dealiasing in the weakly underresolved range by virtue of its split form structure, as shown by Theorem \ref{theo:Delta}. It should be stressed that while this brief section has been concerned with temporal integration of split form DG discretizations, the conclusions discussed are properties of the associated semi-discretizations and will hence hold regardless of the choice of discrete time-stepping scheme.}

\subsection{Effects of Inexact Mass Matrix}
\label{sec:inexact_mass}
{Up to now, our analysis has focused on aliasing errors introduced by the integration of the volumetric component of split form discretizations. As mentioned previously, if the mass matrix is inexact, additional aliasing errors will be generated through the integration of the temporal derivative of the numerical solution. Although split form discretizations will still retain their dealiasing capabilities in this case, the coefficient values from Corollary \ref{corr:optimal_splits} will no longer result in perfect cancellation of the aliasing error of low-order Legendre polynomial products. In fact, careful analysis of this case shows that no such optimal dealiased split can be derived, as the total aliasing error is dependent on the individual values of $i$ and $j$ --instead of depending only on $i+j$ as in Theorem \ref{theo:Delta}.}

\section{Numerical Experiments}
\label{sec:num_exp}
{We validate our theoretical findings by conducting different numerical experiments on the scheme given by Eq.(\ref{eq:FD_formulation}) and Eq.(\ref{eq:2ptflux}) using $(p+1)$ GL and $(p+1)$ GLL quadrature nodes. For experiments utilizing the GLL nodes, we consider schemes with exact and inexact mass matrices. As using an exact -- and hence non-diagonal -- mass matrix with GLL quadrature nodes significantly reduces the practical benefits of the latter, this choice of scheme construction should be regarded more as a useful theoretical tool to systematically investigate volumetric aliasing errors.}

\subsection{Error Tensor Visualization}
In the previous section, we showed that the aliasing error for split form DG discretizations of quadratic non-linearities could be described by the tensor $\Delta_{ijk}(\alpha,\beta)$. Additionally, we demonstrated how different choices of splitting coefficients resulted in the latter featuring distinct structural properties. In this section, we complete this analysis by plotting all non-zero entries of $\Delta_{ijk}(\alpha, \beta)$ for the case $p=7$ with the $(p+1)$ GL nodes using different pairs of splitting coefficients. {As previously discussed, all non-zero entries of $\Delta_{ijk}(\alpha,\beta)$ occur for indices satisfying $i+j+k \geq \tau + 2$ and the tensor inherits the staggered structure of $\Pi$. Hence, non-zero entries of $\Delta_{ijk}(\alpha, \beta)$ are confined to a tetrahedral region in $ijk$ space, as exemplified in Figure \ref{fig:tensor_visual_side} for the conservative form ($\alpha=1$, $\beta=0$). In Figure \ref{fig:tensor_visual}, we plot an isometric view of $\Delta_{ijk}(\alpha,\beta)$ for different values of splitting coefficients. The colour of the markers is used to represent the value of the non-zero entries of the tensor, while the size of the latter has been chosen to encode depth along the direction of the isometric projection. In particular, all markers of a given size are located on the plane $i+j+k=N$ where $N$ is some arbitrary integer. The smallest marker size is associated to the case $i+j+k=\tau+2$, which corresponds to all entries of $\Delta_{ijk}(\alpha,\beta)$ that can be computed via Theorem \ref{theo:Delta}.}

As expected from Theorem \ref{theo:Delta}, the conservative form exhibits the largest entries on the diagonal $i+j+k=\tau+2$ followed by the average split and the entropy stable split. All entries associated with the diagonal $i+j+k=\tau+2$ vanish for the dealiased split introduced in Corollary \ref{corr:optimal_splits}. As mentioned previously, we expect the integration error in the weakly underresolved range to be predominantly governed by entries of $\Delta_{ijk}(\alpha, \beta)$ on the diagonal $i+j+k=\tau+2$.

The skew-symmetric structure of $\Delta_{ijk}$ for the entropy stable split can also be observed in Figure \ref{fig:tensor_visual}. It can be seen that while this split is not optimized for dealiasing in the sense of Corollary \ref{corr:optimal_splits}, it still significantly reduces the magnitude of the entries of $\Delta_{ijk}$ over a wide range of modes when compared to the conservative form.

From Figure \ref{fig:tensor_visual}, it is clear that split forms do not achieve dealiasing by merely reducing the integration error over all Legendre modes; they rather allow one to fine-tune the modes over which this type of error will be significant. This is a fundamental difference compared to dealiasing techniques relying on overintegration. In fact, none of the choices of splitting coefficients considered in Figure \ref{fig:tensor_visual} can be deemed superior over all indices $ijk$. For instance, $|\Delta_{ppp}|$ is lower for the conservative form than for the average split and the dealiased split from Corollary \ref{corr:optimal_splits} while the entropy stable split exhibits a higher aliasing error than the dealiased split on the diagonal $i+j+k=\tau+2$.

{While, for brevity, we omit plotting the tensor entries for the $(p+1)$ GLL integration nodes, results are qualitatively similar to those presented in Figure \ref{fig:tensor_visual} and discussed in this section.}
\begin{figure}
    \centering
    \includegraphics[width=0.5\linewidth]{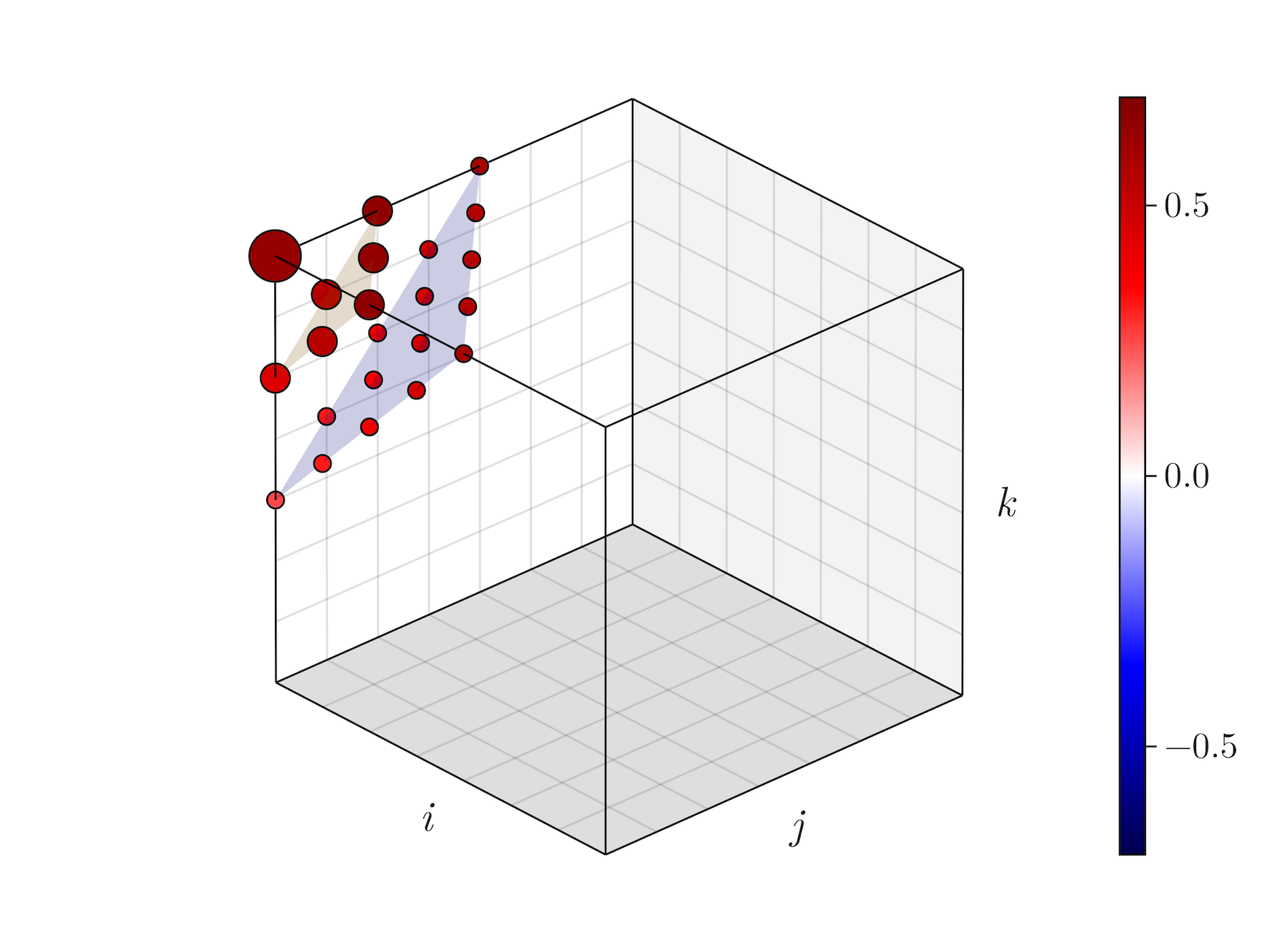}
    \caption{Side view of the tensor $\Delta_{ijk}(1,0)$ in the Legendre basis using $(p+1)$ GL quadrature nodes ($p=q=7$). Planes associated with constant values of $i+j+k$ have been coloured to emphasize the tetrahedral structure of the non-zero entries of $\Delta_{ijk}(\alpha, \beta)$.}
    \label{fig:tensor_visual_side}
\end{figure}
\begin{figure}
    \centering
    \begin{subfigure}{0.49\textwidth}
        \centering
        \includegraphics[width=\linewidth]{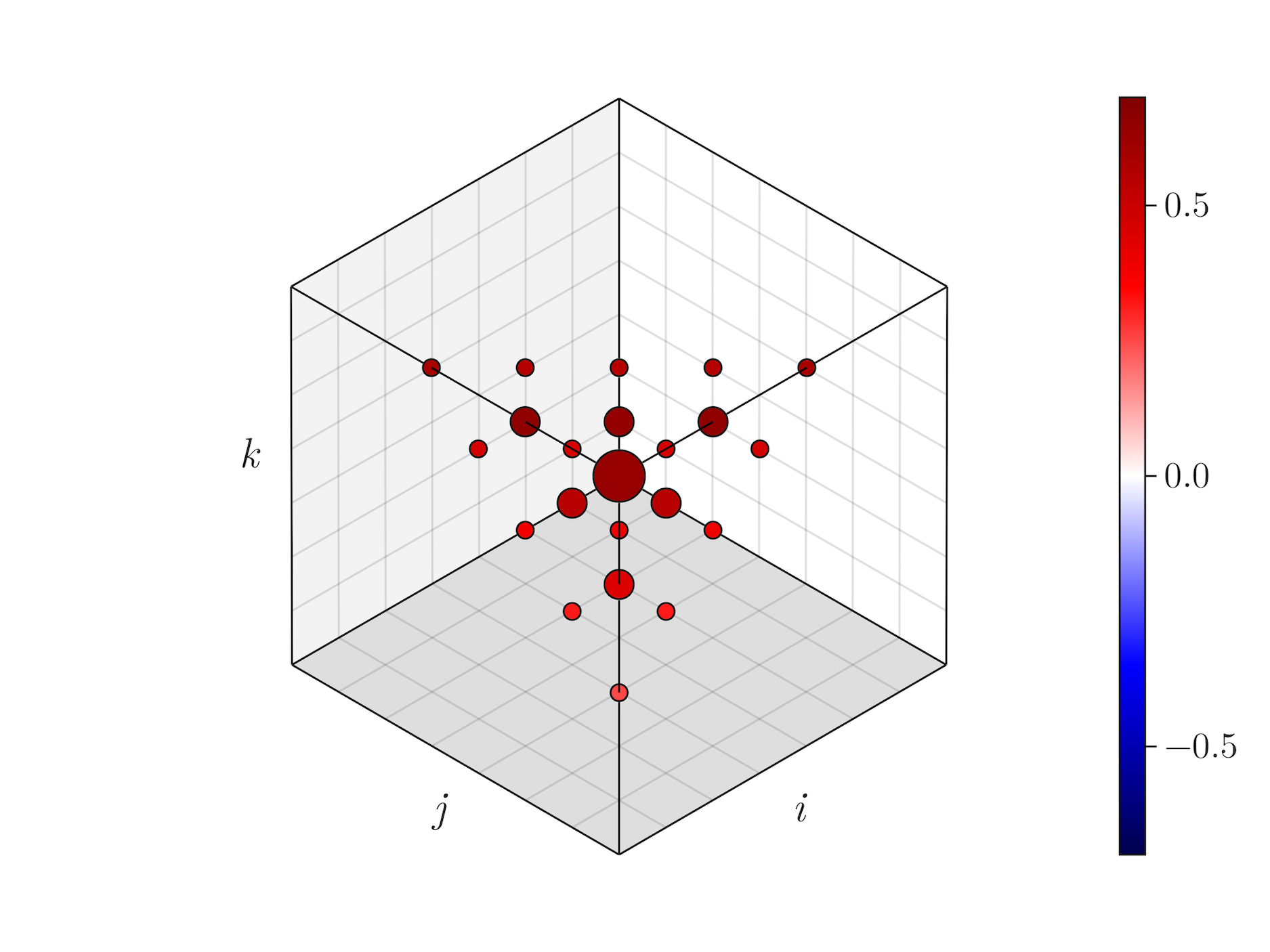}
        \caption{Conservative ($\alpha=1$, $\beta=0$).}
    \end{subfigure}
    \hfill
    \begin{subfigure}{0.49\textwidth}
        \centering
        \includegraphics[width=\linewidth]{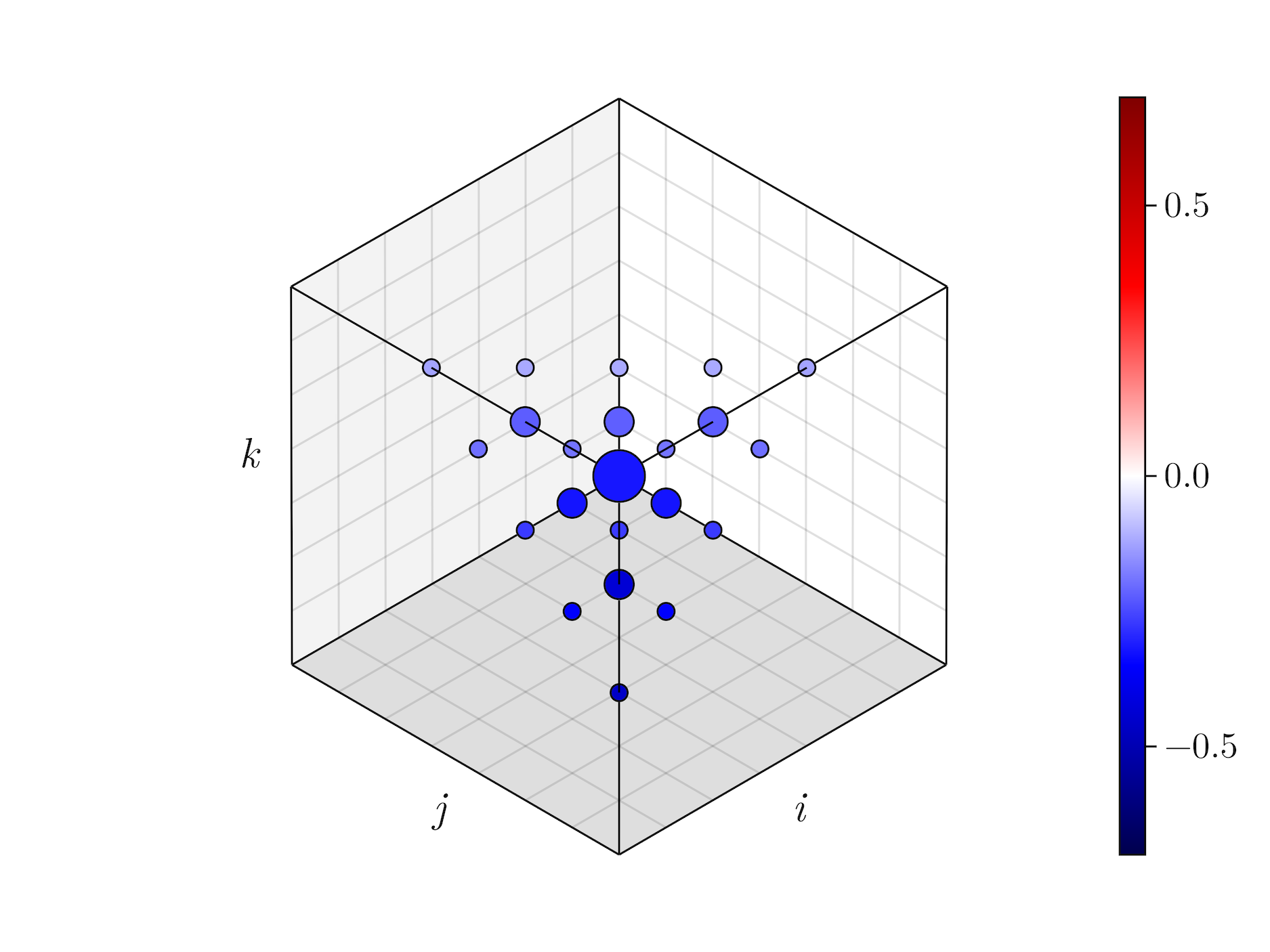}
        \caption{Average ($\alpha=1/2$, $\beta=1/2$).}
    \end{subfigure}
    \centering
    \begin{subfigure}{0.49\textwidth}
        \centering
        \includegraphics[width=\linewidth]{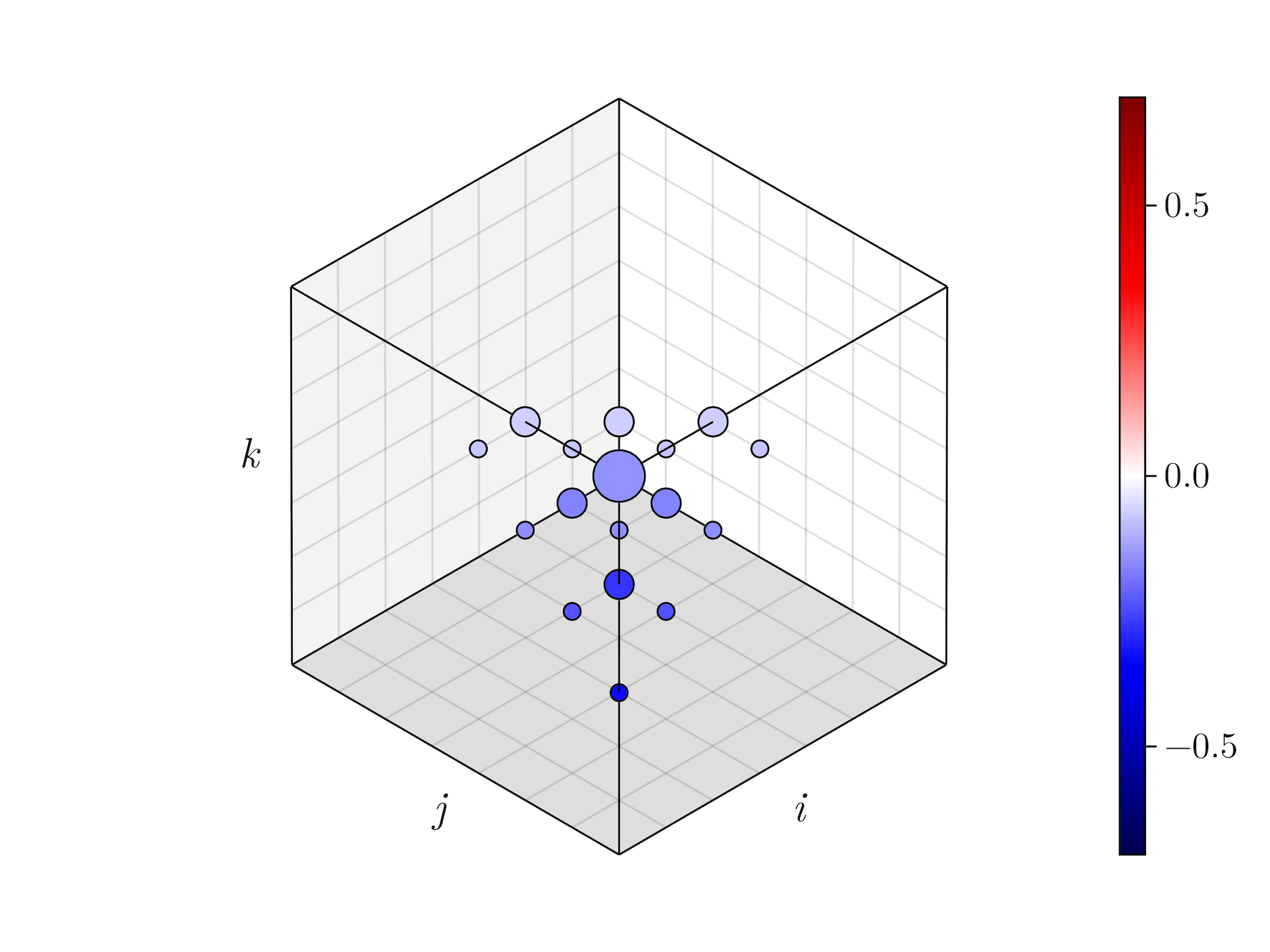}
        \caption{Dealiased ($\alpha=(p+3)/(2p+3)$, $\beta=p/(2p+3)$).}
    \end{subfigure}
    \hfill
    \begin{subfigure}{0.49\textwidth}
        \centering
        \includegraphics[width=\linewidth]{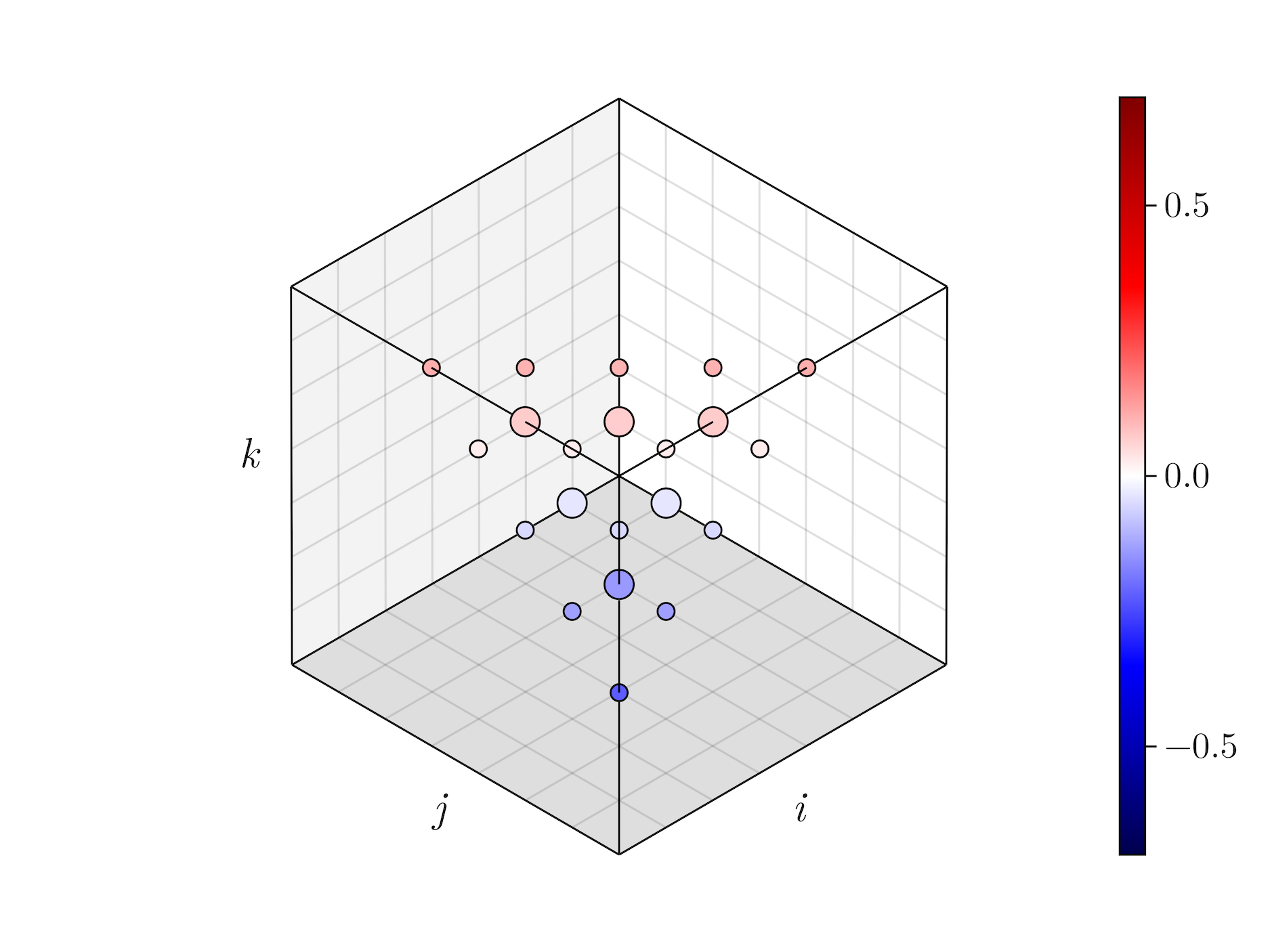}
        \caption{Entropy Stable ($\alpha=2/3$, $\beta=1/3$).}
    \end{subfigure}
    \caption{Structure of the tensor $\Delta_{ijk}(\alpha,\beta)$ in the Legendre basis using $(p+1)$ GL quadrature nodes ($p=q=7$).}
    \label{fig:tensor_visual}
\end{figure}

\subsection{Error for Split Form DG Discretizations}
{We first compare the error associated with different split form DG discretizations for the 1D Burgers problem. To do so, we consider the initial conditions
\begin{equation}
    u(x,0) = \sin(2\pi (x-0.1)) + 0.01,
    \label{eq:sin_IC}
\end{equation}
and evolve the system on the periodic domain $[-0.5, 0.5]$ with uniform elements of size $\Delta x$ using the split forms of interest and a Lax-Friedrich numerical flux at the elemental interfaces. We integrate in time until $t = 0.3/\pi$ utilizing a sufficiently small time step $\Delta t$ to render temporal discretization effects irrelevant. It should be noted that this test case is very similar to that considered by \cite{gassner2013skew}.}

\subsubsection{L2 Error}
\label{sec:L2_error}
In Figures \ref{fig:L2_error_GL}, \ref{fig:L2_error_GLL} and \ref{fig:L2_error_GLL_EXACT}, we plot the L2 error associated with the numerical solution as a function of the number of degrees of freedom for $p=5$ and $p=8$ split form discretizations. Figure \ref{fig:L2_error_GL} shows results obtained for the $(p+1)$ GL quadrature nodes, while Figures \ref{fig:L2_error_GLL} and \ref{fig:L2_error_GLL_EXACT} were generated for the $(p+1)$ GLL quadrature nodes using exact and inexact mass matrices. {In all cases, error levels are compared to those associated with a DG scheme using a volume quadrature of sufficient strength to exactly integrate the quadratic non-linearity.}

{As can be observed, all schemes achieve an approximate empirical convergence rate of $O(\Delta x^{p+1})$. From Figure \ref{fig:L2_error_GL}, it can be seen that the L2 error levels associated with split forms using GL quadrature nodes are essentially indistinguishable from the exactly integrated case.} Since the strength of the GL quadrature is $\tau = 2p+1$, as shown in \ref{app4}, the integration error must scale with $O(\Delta x^{p+2})$ for smooth solutions. Hence, aliasing effects are masked from the global truncation error results reported in Figure \ref{fig:L2_error_GL}. As will be shown in section \ref{sec:results_stability}, this does not, however, render aliasing errors irrelevant as they are inherently tied to scheme stability.

{Since the GLL quadrature is associated with a strength of $\tau=2p-1$, following the argument presented in \ref{app4}, the integration error must scale with $O(\Delta x^{p})$ and hence the choice of splitting coefficients is expected to affect the global truncation error measured. This is consistent with the results reported in Figures \ref{fig:L2_error_GLL} and \ref{fig:L2_error_GLL_EXACT}. For schemes that utilize an inexact mass matrix, split forms are associated with marginally, but consistently, smaller error levels than the conservative form in the asymptotic regime. Additionally, in this case, the dealiased split is marginally outperformed by the average split due to additional aliasing errors introduced by inexact integration of the temporal derivative. These results are consistent with the discussion presented in section \ref{sec:inexact_mass}. For the schemes using an exact mass matrix, Figure \ref{fig:L2_error_GLL_EXACT} reveals a clear difference in the error levels in the asymptotic range. In particular, the dealiased split is associated with the lowest error, followed by the average split and the entropy stable split. In this case, the dealiased split and the exactly integrated scheme are almost undistinguishable. The conservative form features the highest error levels. These results are consistent with the theoretical predictions stemming from Theorem \ref{theo:Delta} and Corollary \ref{corr:optimal_splits}.}
\begin{figure}
    \centering
    \begin{subfigure}{0.49\textwidth}
        \centering
        \includegraphics[width=\linewidth]{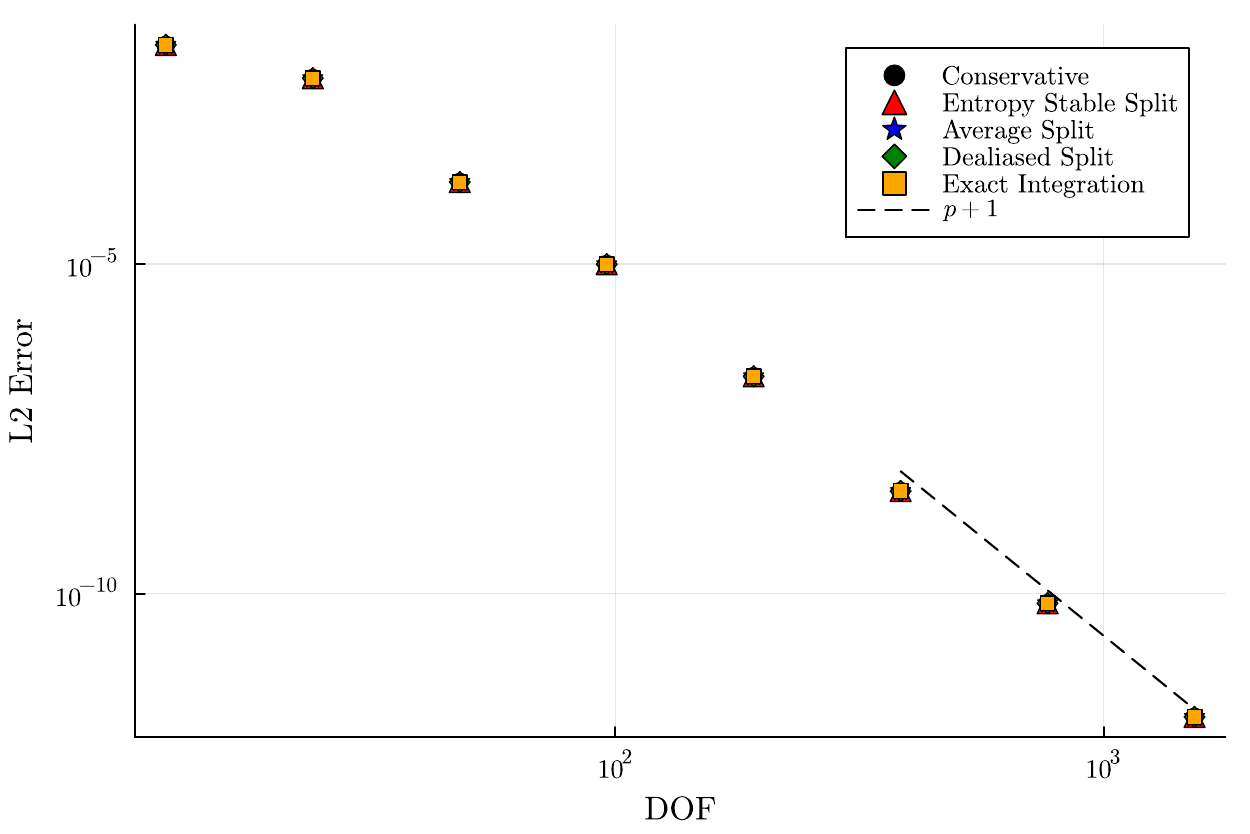}
        \caption{$p=5$.}
    \end{subfigure}
    \hfill
    \begin{subfigure}{0.49\textwidth}
        \centering
        \includegraphics[width=\linewidth]{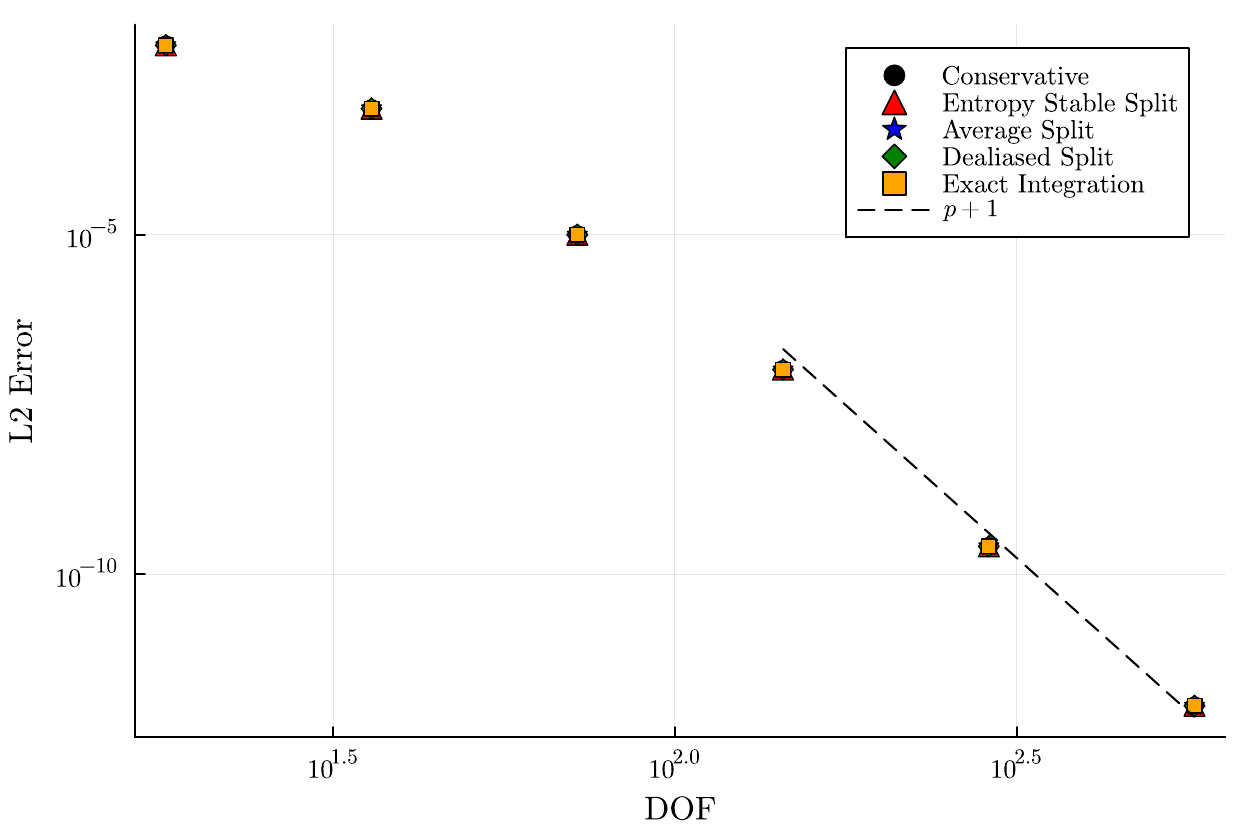}
        \caption{$p=8$.}
    \end{subfigure}
    \caption{L2 error for the different split forms using $(p+1)$ GL quadrature nodes. L2 error levels for all split forms approximately overlap with the exactly integrated DG scheme.}
    \label{fig:L2_error_GL}
\end{figure}
\begin{figure}
    \centering
    \begin{subfigure}{0.49\textwidth}
        \centering
        \includegraphics[width=\linewidth]{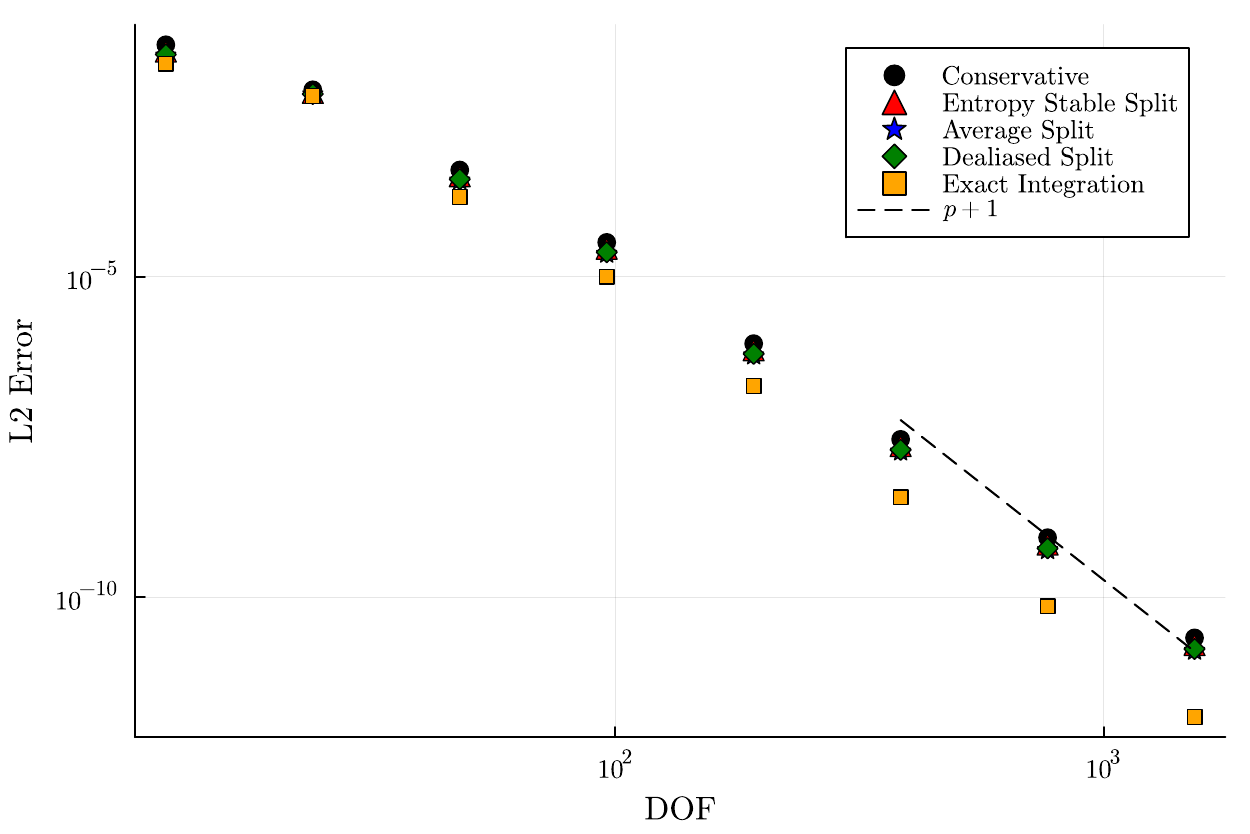}
        \caption{$p=5$.}
    \end{subfigure}
    \hfill
    \begin{subfigure}{0.49\textwidth}
        \centering
        \includegraphics[width=\linewidth]{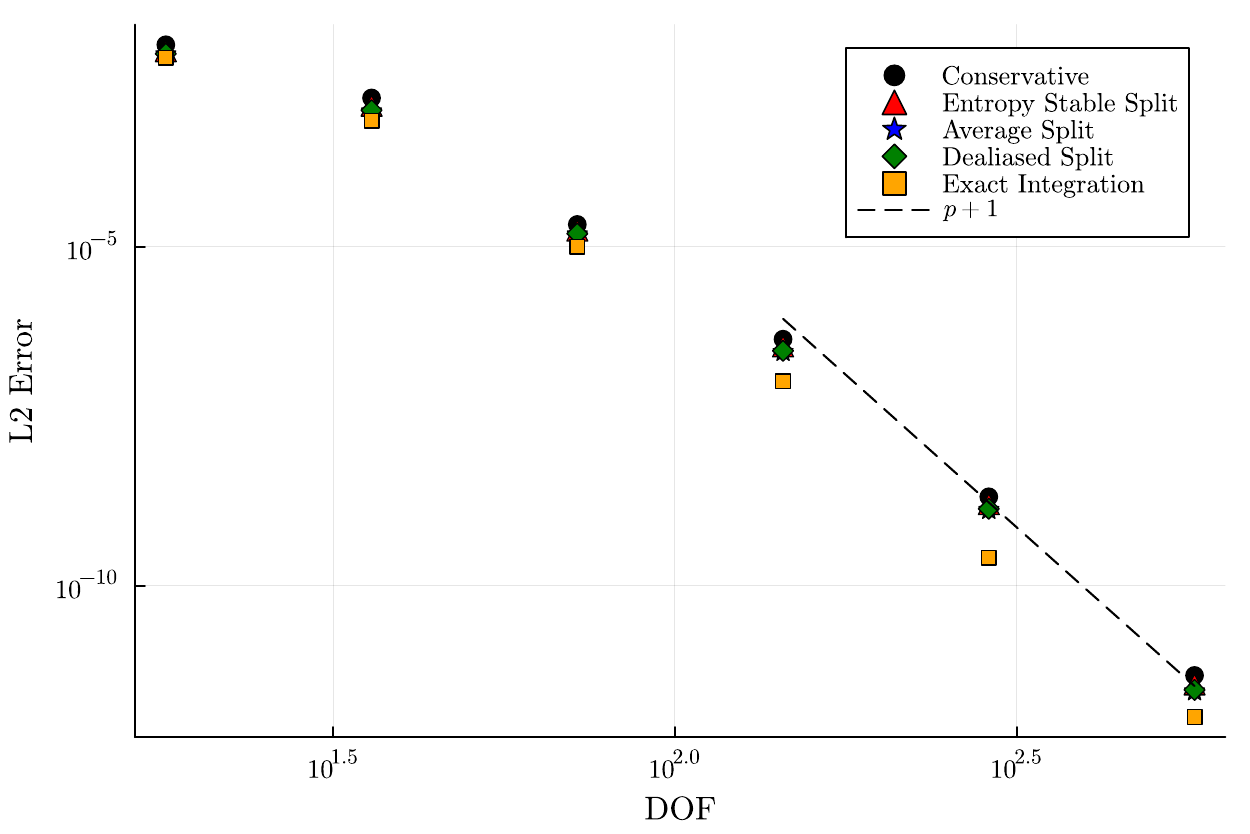}
        \caption{$p=8$.}
    \end{subfigure}
    \caption{L2 error for the different split forms using $(p+1)$ GLL quadrature nodes and an inexact mass matrix. L2 error levels for all split forms approximately overlap.}
    \label{fig:L2_error_GLL}
\end{figure}
\begin{figure}
    \centering
    \begin{subfigure}{0.49\textwidth}
        \centering
        \includegraphics[width=\linewidth]{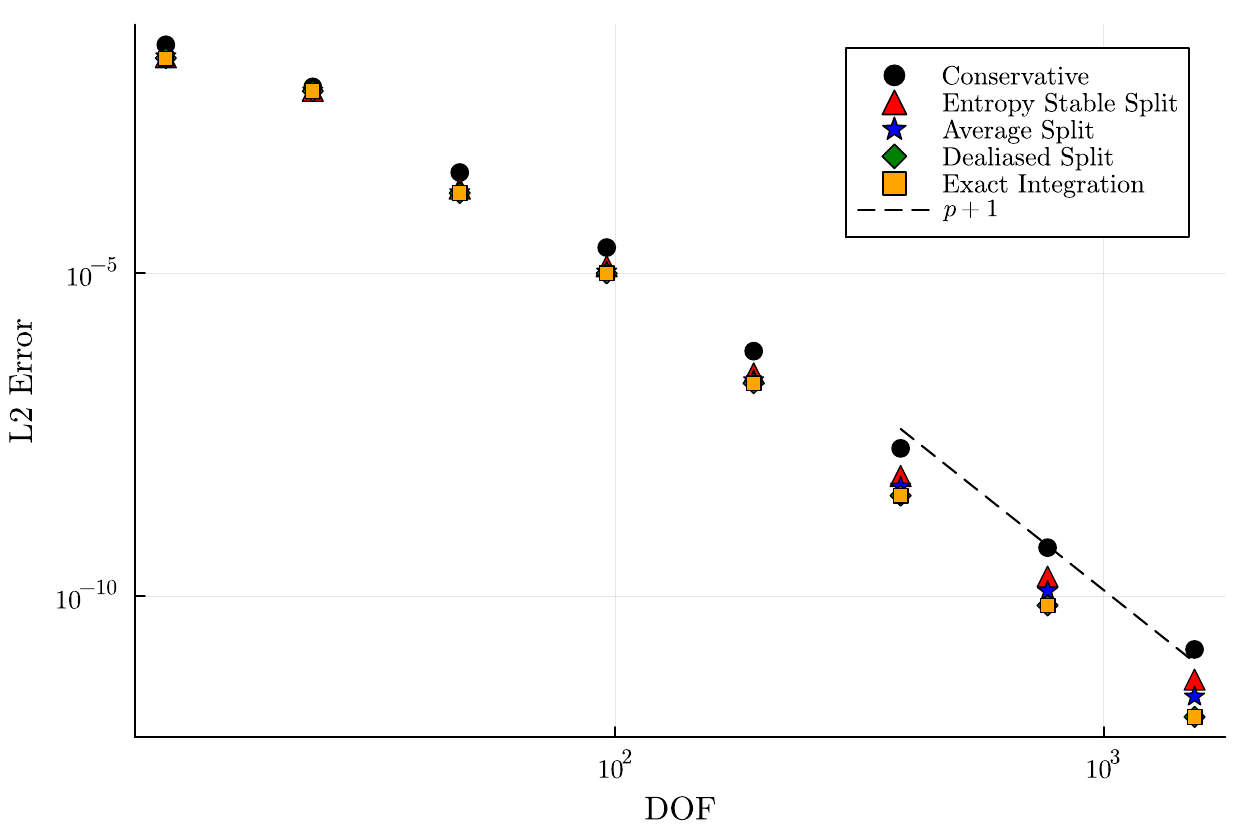}
        \caption{$p=5$.}
    \end{subfigure}
    \hfill
    \begin{subfigure}{0.49\textwidth}
        \centering
        \includegraphics[width=\linewidth]{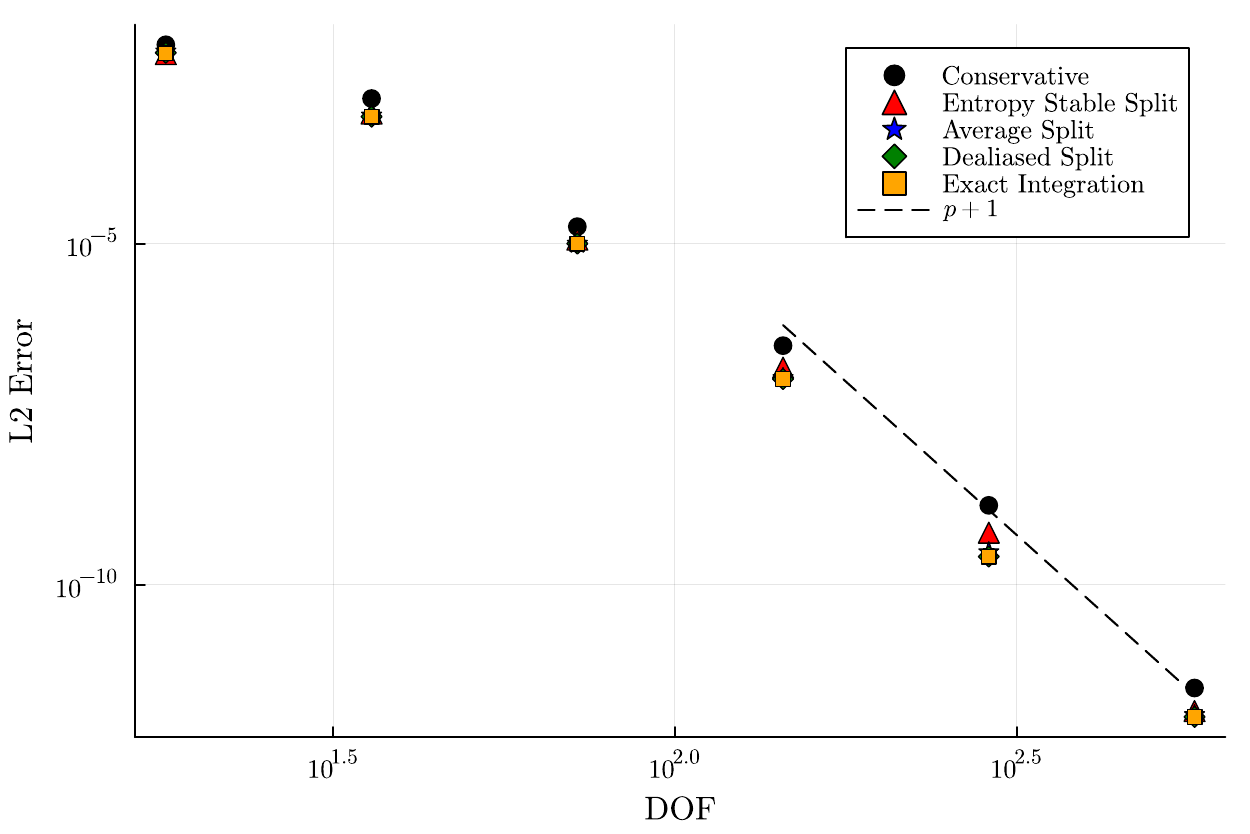}
        \caption{$p=8$.}
    \end{subfigure}
    \caption{L2 error for the different split forms using $(p+1)$ GLL quadrature nodes and an exact mass matrix. L2 error levels for the dealiased split and the exactly integrated DG scheme approximately overlap.}
    \label{fig:L2_error_GLL_EXACT}
\end{figure}

\subsubsection{Integration Error}
{To further our analysis, we consider isolating the integration error component from the global truncation error studied in the previous section. To do so, we consider the same test case as in section \ref{sec:L2_error} and consider computing the $L2$ error between the numerical solution obtained via the different split forms and an exactly integrated DG scheme. For sufficiently small integration times, this process allows us to estimate the integration error empirically.}

{The integration error for different split forms as a function of the number of degrees of freedom is shown in Figures \ref{fig:integration_error_GL}, \ref{fig:integration_error_GLL} and \ref{fig:integration_error_GLL_EXACT} for $p=5$ and $p=8$ schemes. As can be seen in Figures \ref{fig:integration_error_GL} and \ref{fig:integration_error_GLL_EXACT}, for split forms utilizing the GL quadrature or the GLL quadrature with an exact mass matrix, the conservative form, the average split and the entropy stable split result in an integration error scaling asymptotically with $O(\Delta x^{\tau-p+1})$. The dealiased splits from Corollary \ref{corr:optimal_splits} are, however, associated with a higher asymptotic convergence rate of $O(\Delta x^{\tau-p+2})$. This is consistent with the expected orders for the integration error (\ref{app4}) and the fact that the dealiased split is defined to eliminate all integration errors associated with the lowest-order aliased mode. Moreover, it can be seen that the relative integration error levels in Figures \ref{fig:integration_error_GL} and \ref{fig:integration_error_GLL_EXACT} are correctly predicted by Theorem \ref{theo:Delta}. In particular, for the case $p=5$ with GL quadrature nodes, the dealiased split is associated with the lowest integration error, followed by the entropy stable split and the average split. The conservative form results in the highest integration error. For the case $p=8$ with GL quadrature nodes and the cases $p=5$ and $p=8$ with GLL quadrature nodes and an exact mass matrix, results are similar, but the average split yields a lower integration error than the entropy stable split.}

{Finally, as can be observed in Figure \ref{fig:integration_error_GLL}, when GLL quadrature nodes are used with an inexact mass matrix, the integration error for the conservative form and all split formulations scales asymptotically with $O(\Delta x^p)$. As the integration error is now dominated by aliasing effects caused by inexactness of the mass matrix, the difference in error levels between the different split forms is significantly reduced. Nevertheless, split forms are still associated with a lower integration error than the conservative form in this case. As previously touched upon, the dealiased split is no longer associated with the smallest integration error.}
\begin{figure}
    \centering
    \begin{subfigure}{0.49\textwidth}
        \centering
        \includegraphics[width=\linewidth]{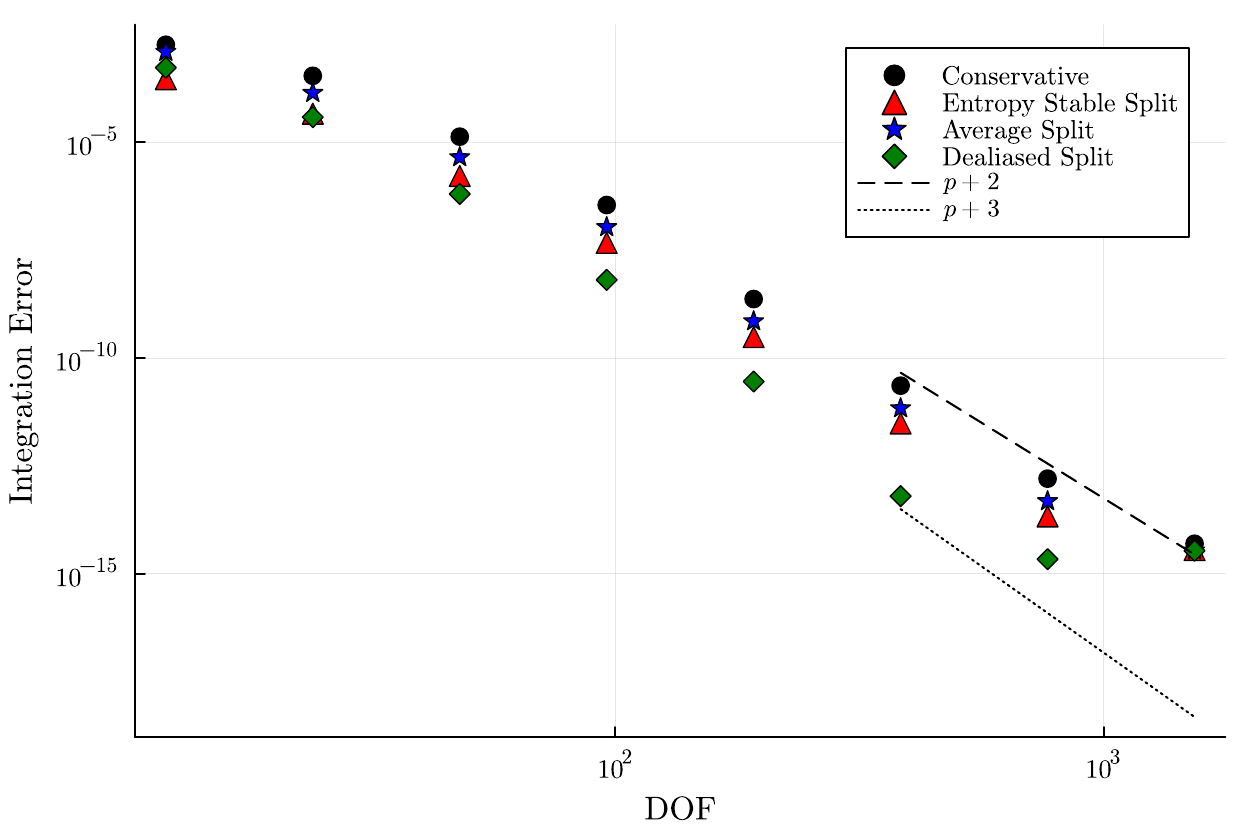}
        \caption{$p=5$.}
    \end{subfigure}
    \hfill
    \begin{subfigure}{0.49\textwidth}
        \centering
        \includegraphics[width=\linewidth]{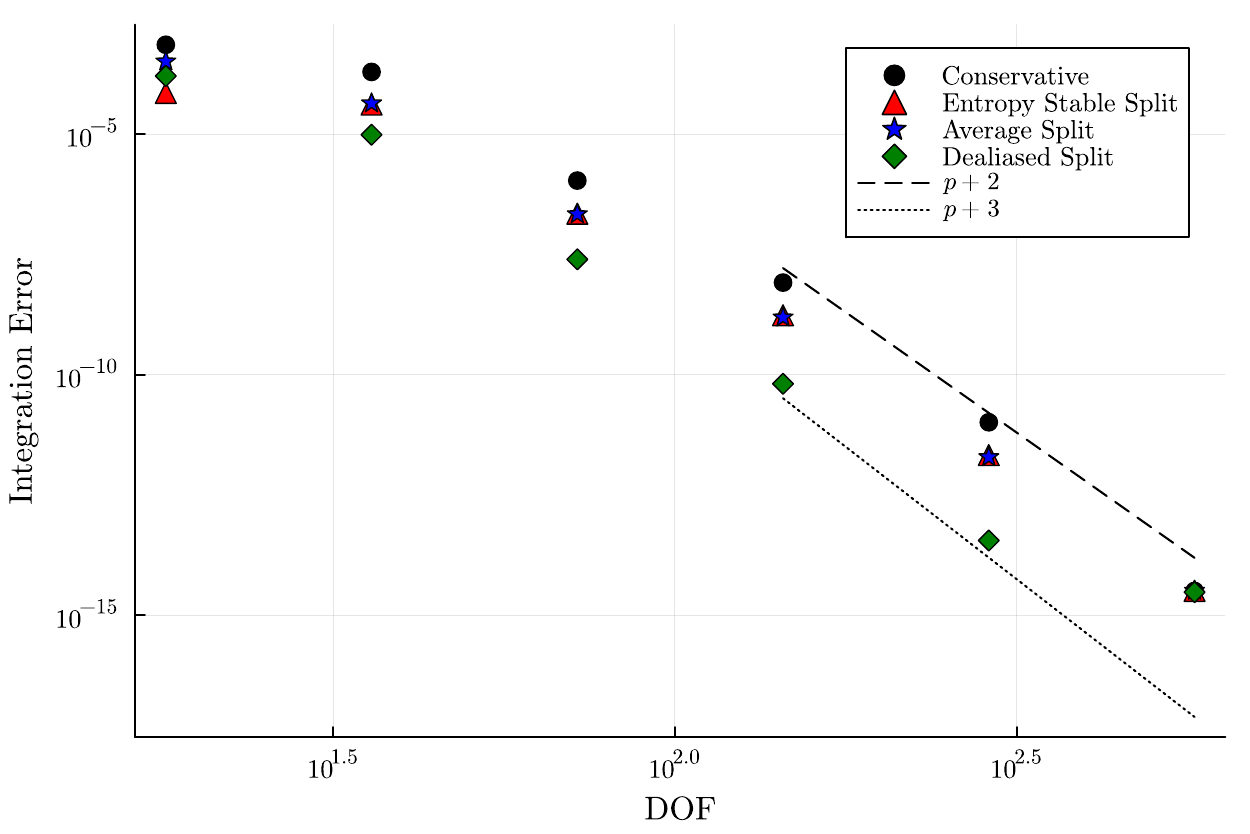}
        \caption{$p=8$.}
    \end{subfigure}
    \caption{Integration error for the different split forms using $(p+1)$ GL quadrature nodes.}
    \label{fig:integration_error_GL}
\end{figure}
\begin{figure}
    \centering
    \begin{subfigure}{0.49\textwidth}
        \centering
        \includegraphics[width=\linewidth]{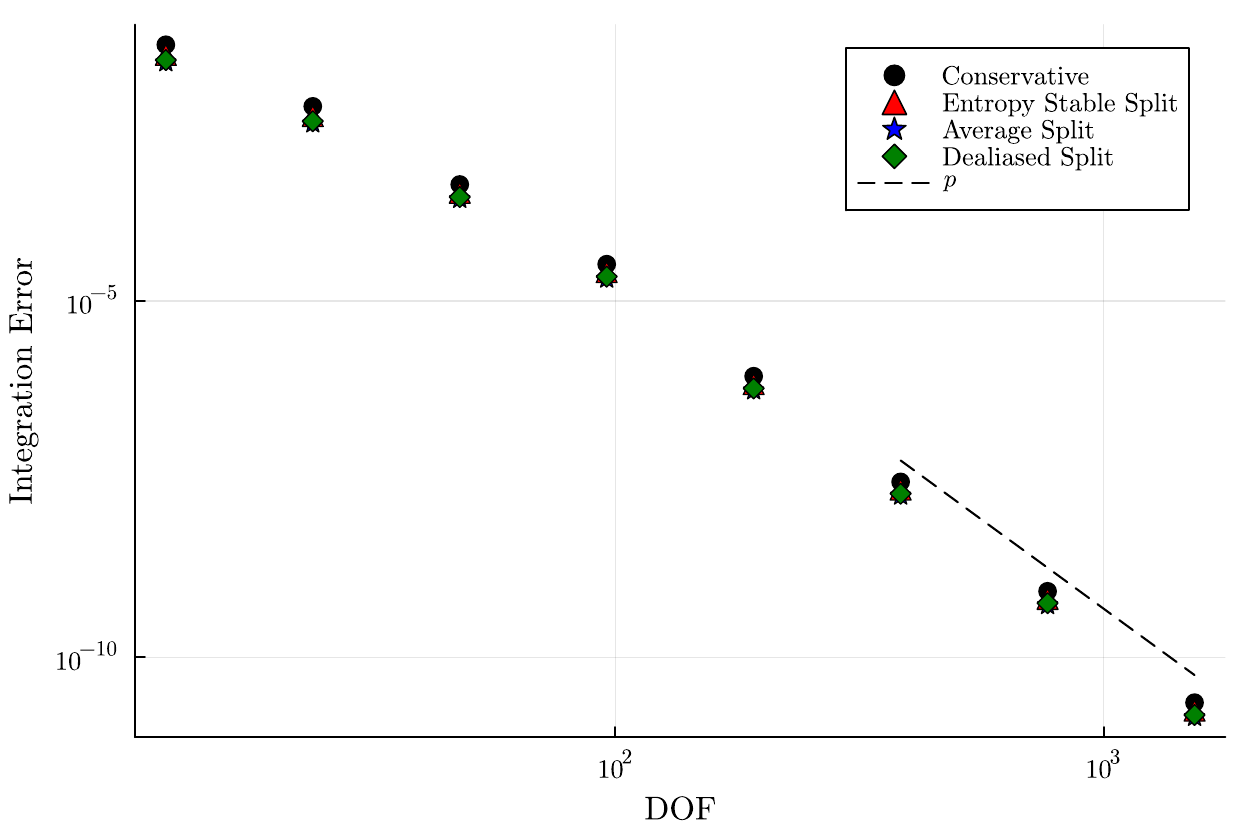}
        \caption{$p=5$.}
    \end{subfigure}
    \hfill
    \begin{subfigure}{0.49\textwidth}
        \centering
        \includegraphics[width=\linewidth]{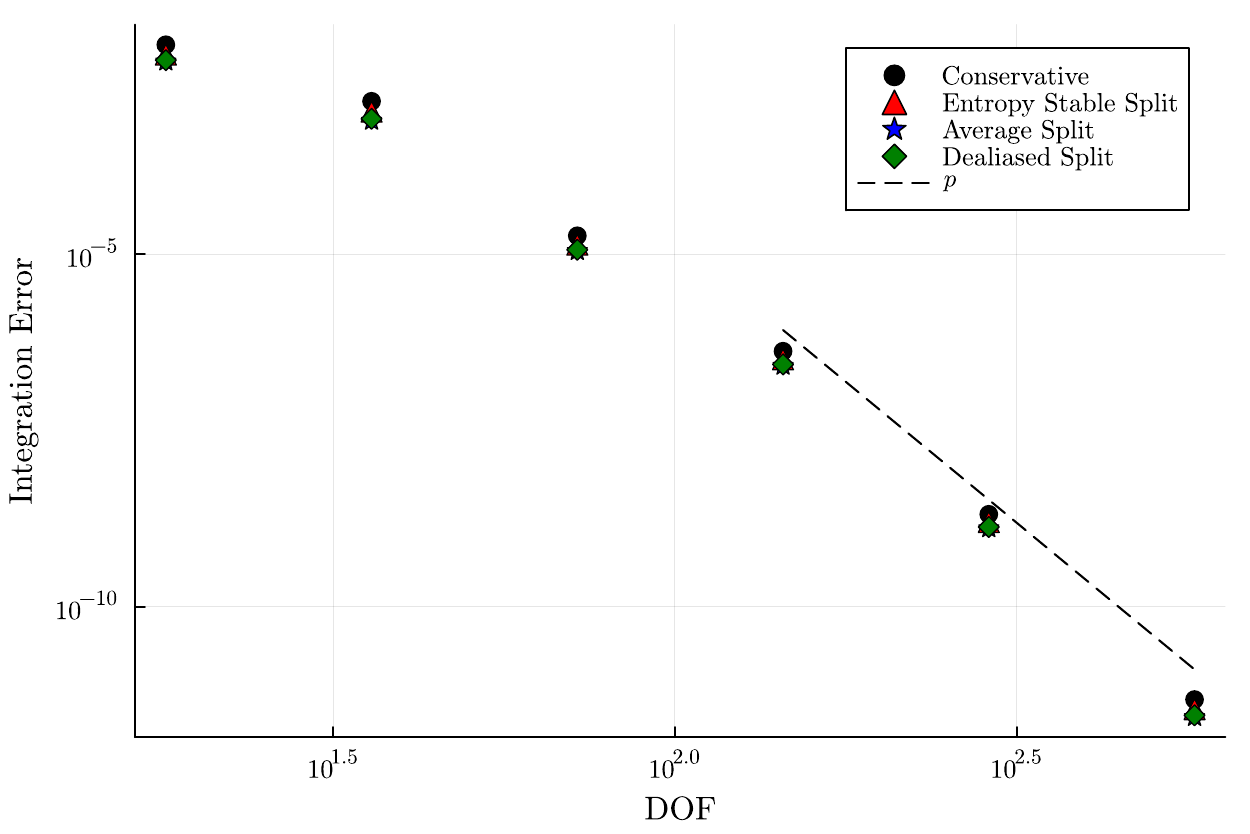}
        \caption{$p=8$.}
    \end{subfigure}
    \caption{Integration error for the different split forms using $(p+1)$ GLL quadrature nodes and an inexact mass matrix. Integration error levels for all split forms approximately overlap.}
    \label{fig:integration_error_GLL}
\end{figure}
\begin{figure}
    \centering
    \begin{subfigure}{0.49\textwidth}
        \centering
        \includegraphics[width=\linewidth]{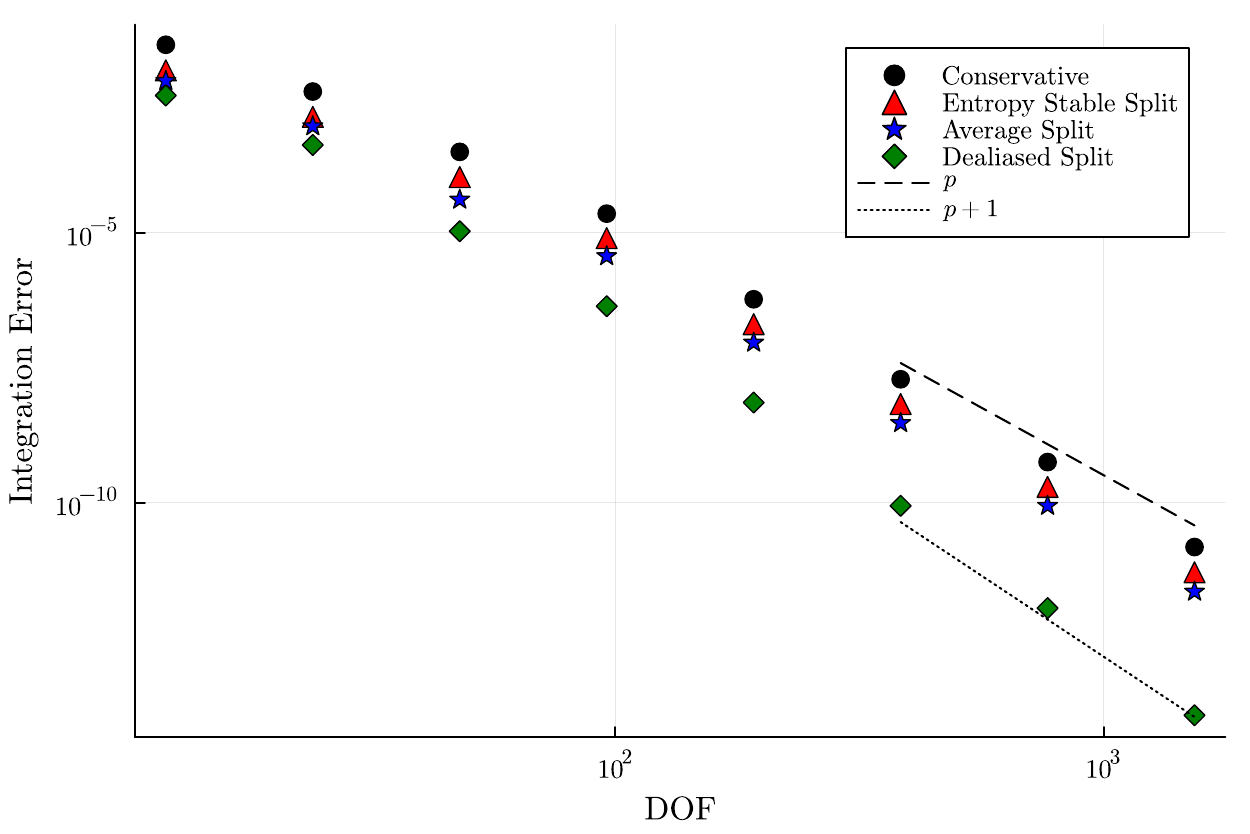}
        \caption{$p=5$.}
    \end{subfigure}
    \hfill
    \begin{subfigure}{0.49\textwidth}
        \centering
        \includegraphics[width=\linewidth]{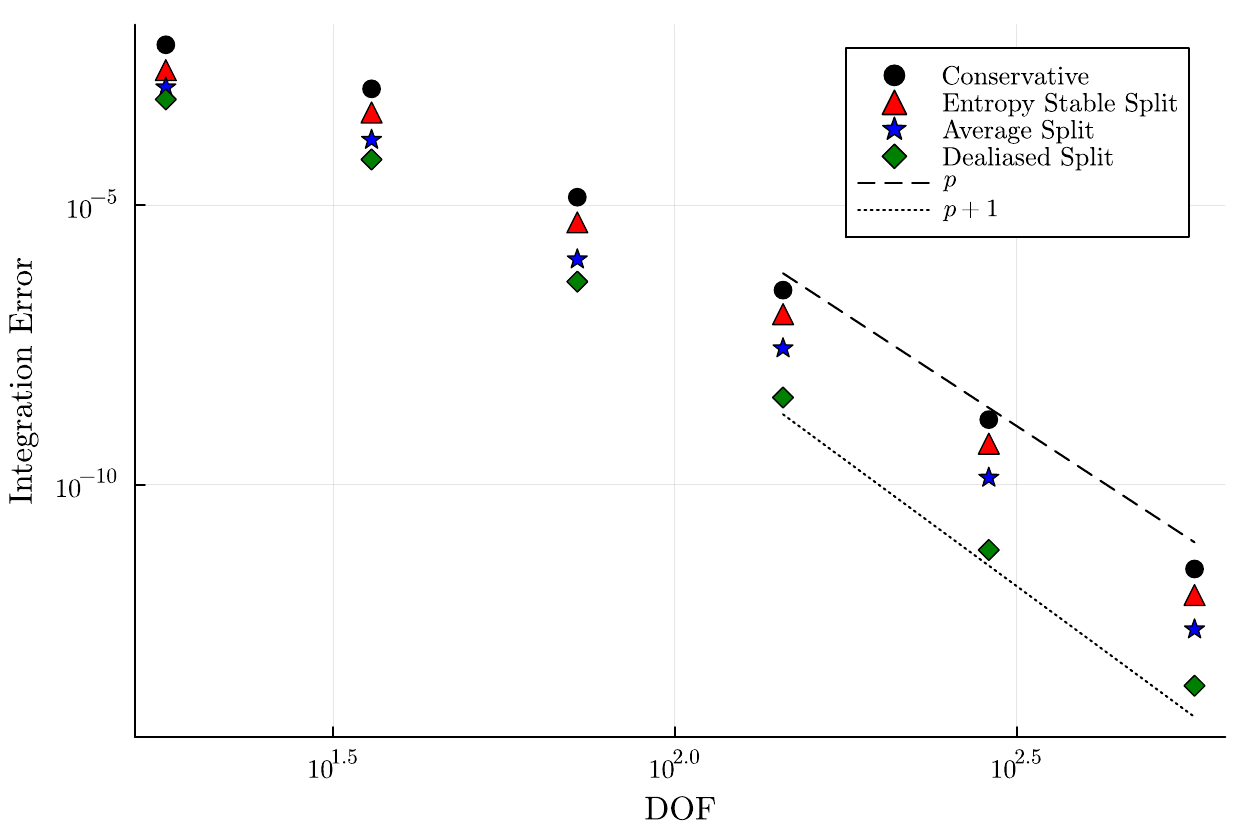}
        \caption{$p=8$.}
    \end{subfigure}
    \caption{Integration error for the different split forms using $(p+1)$ GLL quadrature nodes and an exact mass matrix.}
    \label{fig:integration_error_GLL_EXACT}
\end{figure}

\subsection{Stability of Split Form DG Discretizations}
\label{sec:results_stability}
We finally investigate the temporal stability of split form DG spatial discretizations. To do so, we initialize our system using the initial conditions from Eq.(\ref{eq:sin_IC}) on the periodic domain $[-0.5,0.5]$ discretized using a uniform mesh with 20 elements. We advance the solution in time until $t=3.0$ using the three stage third order strongly stability preserving relaxed Runge-Kutta scheme \cite{ranocha2020relaxation} with 10000 time steps. {As done previously, we also compare the split forms of interest with an exactly integrated DG scheme. By construction, this latter spatial discretization is entropy stable.} In all cases, the Lax Friedrich flux is used at the elemental interfaces.

The time evolution of the quadratic entropy of the numerical solutions obtained with the different split forms is shown in Figure \ref{fig:entropy_vs_time_GL} for $p=5$ and $p=8$ schemes using a GL quadrature. The associated numerical solutions at time $t=3.0$ are shown in Figure \ref{fig:sol_t3} and compared to a reference solution obtained via a finite volume discretization equipped with 2000 elements. As expected, the conservative form is rendered unstable shortly after shock formation, while quadratic entropy is a monotonically decreasing function of time for the entropy stable split. The average split is capable of delaying the solution blow-up in both cases, but the solution is eventually corrupted with aliasing instabilities and the discretization becomes unstable. For the case $p=5$, the numerical solution obtained via the dealiased split remains stable, and the temporal evolution of its associated entropy is almost indistinguishable from that of the entropy stable split. This can be explained by the fact that out of the 11 Legendre modes of the quadratic flux function, only 3 are subject to aliasing errors (as opposed to 4 for the conservative form and the average split). In this case, the instabilities introduced by the comparatively smaller aliasing error appear to be compensated by the numerical dissipation of the scheme. However, when the order of the scheme is increased to $p=8$, the elimination of the lowest-order aliased mode by the dealiased split is insufficient to control the aliasing instabilities. While the entropy still does not blow up in this case, the solution at $t=3.0$ is severely corrupted as shown in Figure \ref{fig:sol_t3}. For a scheme of order $p=9$, the dealiased split was found to lead to numerical solution blow-up for $t \in [0, 3]$.

{The same numerical experiment was also conducted with split forms using the GLL quadrature with inexact and exact mass matrices. The time evolution of the quadratic entropy for $p=5$ and $p=8$ schemes is shown in Figure \ref{fig:entropy_vs_time_GLL} and Figure \ref{fig:entropy_vs_time_GLLEXACT} for the inexact mass matrix and exact mass matrix cases, respectively. As can be seen, as a result of the larger aliasing errors resulting from the lower strength of the GLL quadrature, only the entropy stable split is capable of preserving the stability of the numerical solution. For $p=5$ schemes, all split forms are observed to delay numerical solution blow-up compared to the conservative form, although instabilities in the GLL split forms manifest themselves at earlier times than for their GL counterparts. The close examination of $p=8$ schemes reveals an interesting behaviour. While the conservative form is rendered unstable at a slightly earlier time for the exact mass matrix case, the blow-up of its associated numerical solution can be delayed by equipping the scheme with an inexact mass matrix. In this specific case, the conservative form actually becomes unstable after the average split and the dealiased split. This behaviour can be explained by the fact that utilizing an inexact mass matrix results in the effective filtering of the $p$th Legendre component of the DG residual, thus equipping the scheme with an additional stabilization mechanism. Hence, when aliasing errors are large, the temporal growth of the numerical solution's entropy cannot be understood only through the entries of $\Delta_{ijk}$; the interactions between $\Delta_{ijp}$ and this filtering effect also become significant.}
\begin{figure}
    \centering
    \begin{subfigure}{0.49\textwidth}
        \centering
        \includegraphics[width=\linewidth]{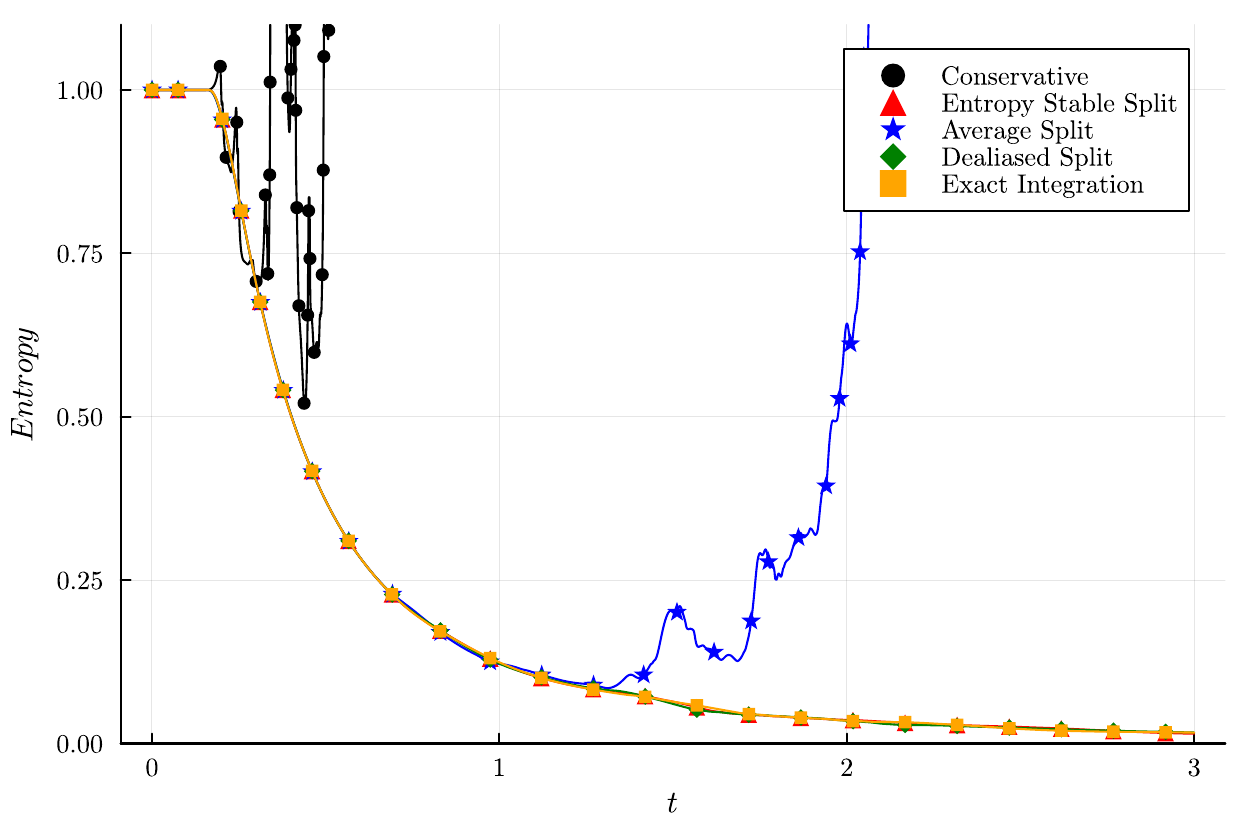}
        \caption{$p=5$.}
    \end{subfigure}
    \hfill
    \begin{subfigure}{0.49\textwidth}
        \centering
        \includegraphics[width=\linewidth]{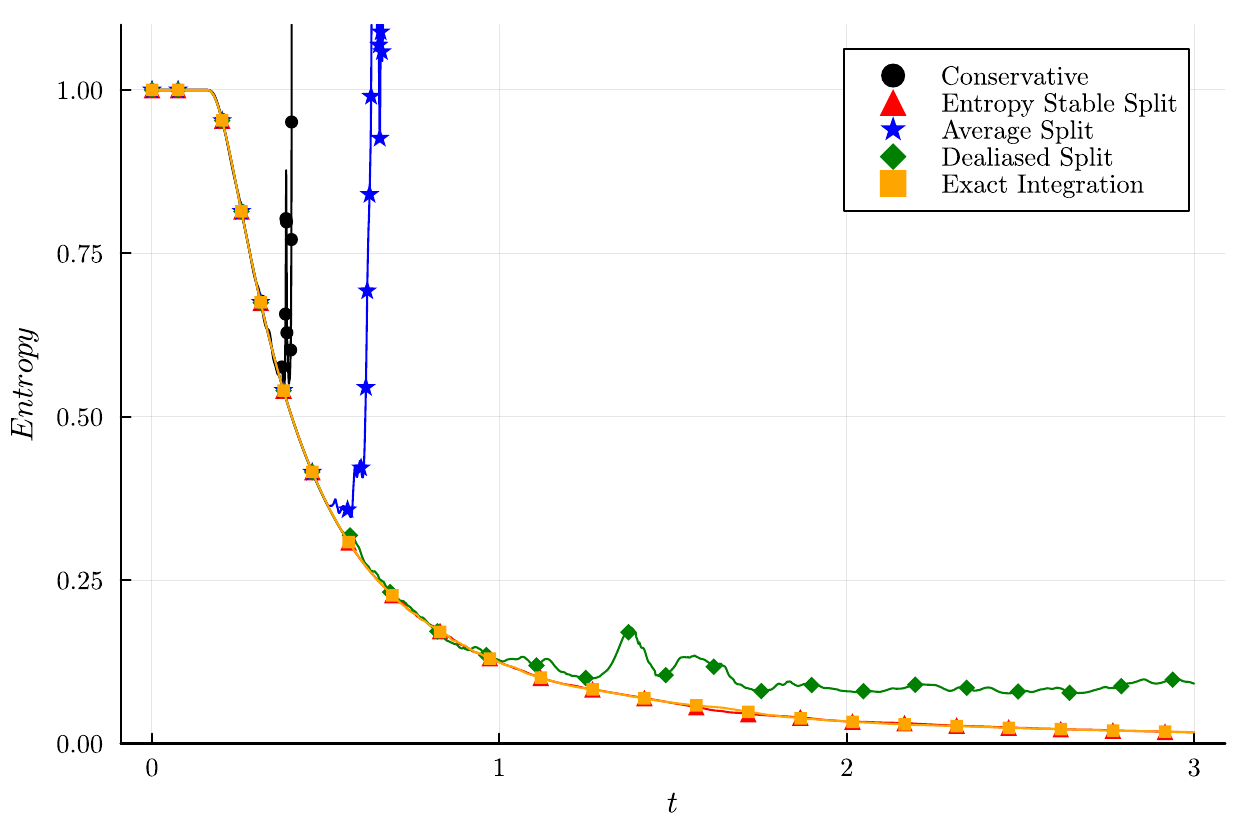}
        \caption{$p=8$.}
    \end{subfigure}
    \caption{Time evolution of the quadratic entropy for the different split forms using $(p+1)$ GL quadrature nodes. The entropy for the exactly integrated scheme, the entropy stable split and the dealiased split approximately overlap for the case $p=5$. The entropy for the exactly integrated scheme and the entropy stable split approximately overlap for the case $p=8$.}
    \label{fig:entropy_vs_time_GL}
\end{figure}
\begin{figure}
    \centering
    \begin{subfigure}{0.49\textwidth}
        \centering
        \includegraphics[width=\linewidth]{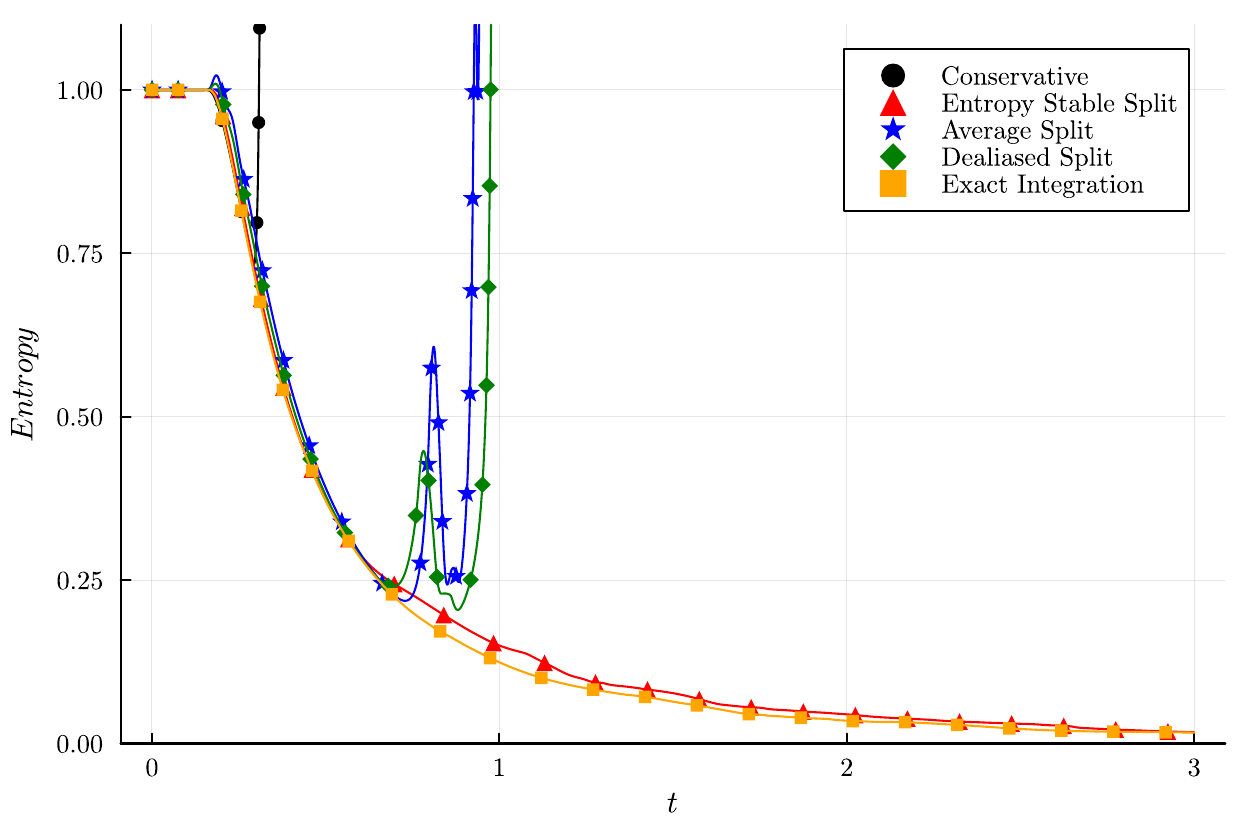}
        \caption{$p=5$.}
    \end{subfigure}
    \hfill
    \begin{subfigure}{0.49\textwidth}
        \centering
        \includegraphics[width=\linewidth]{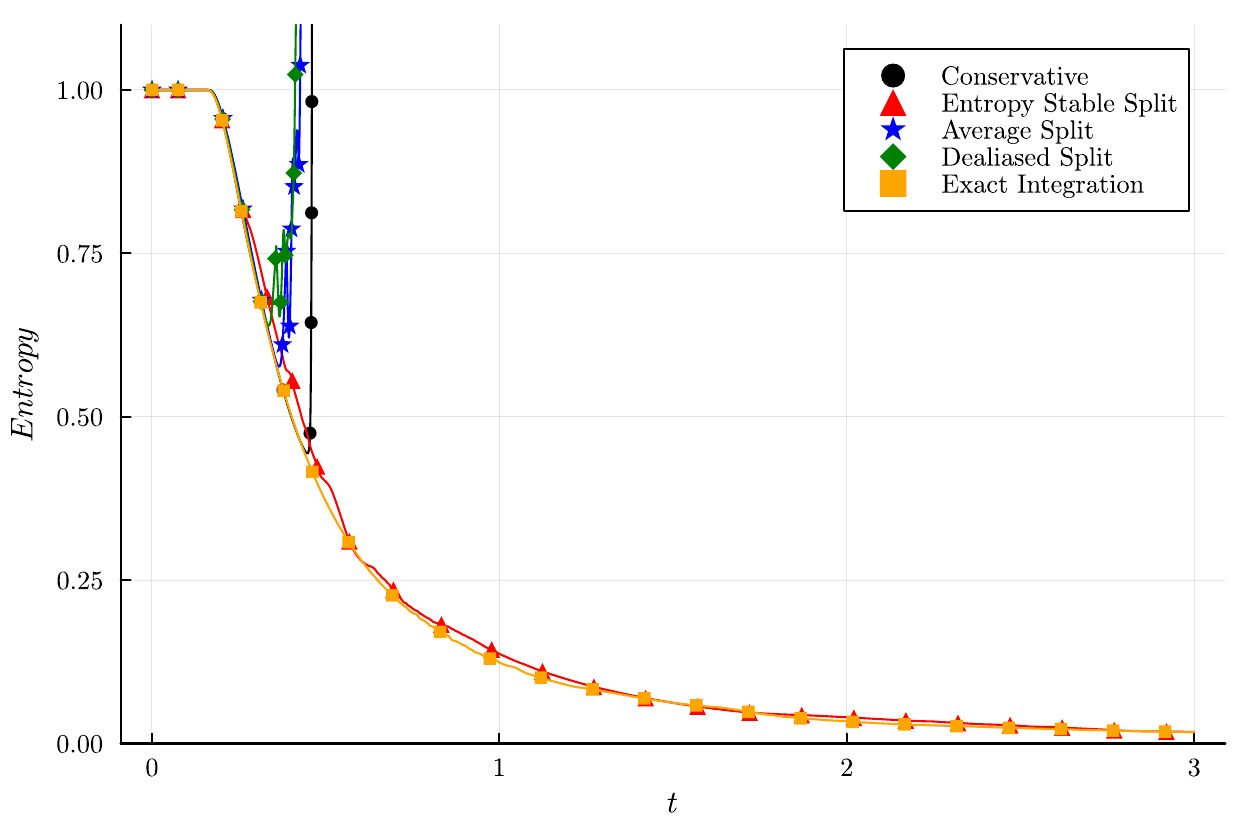}
        \caption{$p=8$.}
    \end{subfigure}
    \caption{Time evolution of the quadratic entropy for the different split forms using $(p+1)$ GLL quadrature nodes with an inexact mass matrix.}
    \label{fig:entropy_vs_time_GLL}
\end{figure}
\begin{figure}
    \centering
    \begin{subfigure}{0.49\textwidth}
        \centering
        \includegraphics[width=\linewidth]{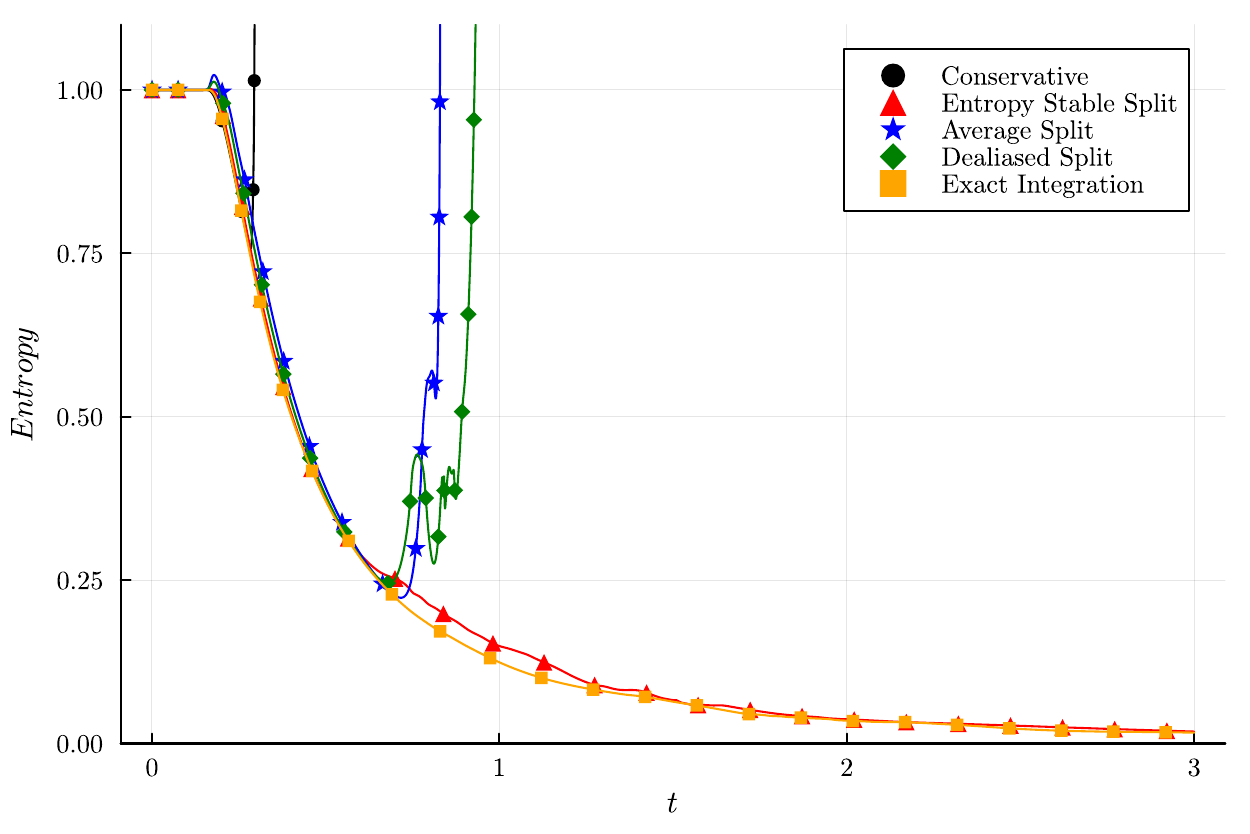}
        \caption{$p=5$.}
    \end{subfigure}
    \hfill
    \begin{subfigure}{0.49\textwidth}
        \centering
        \includegraphics[width=\linewidth]{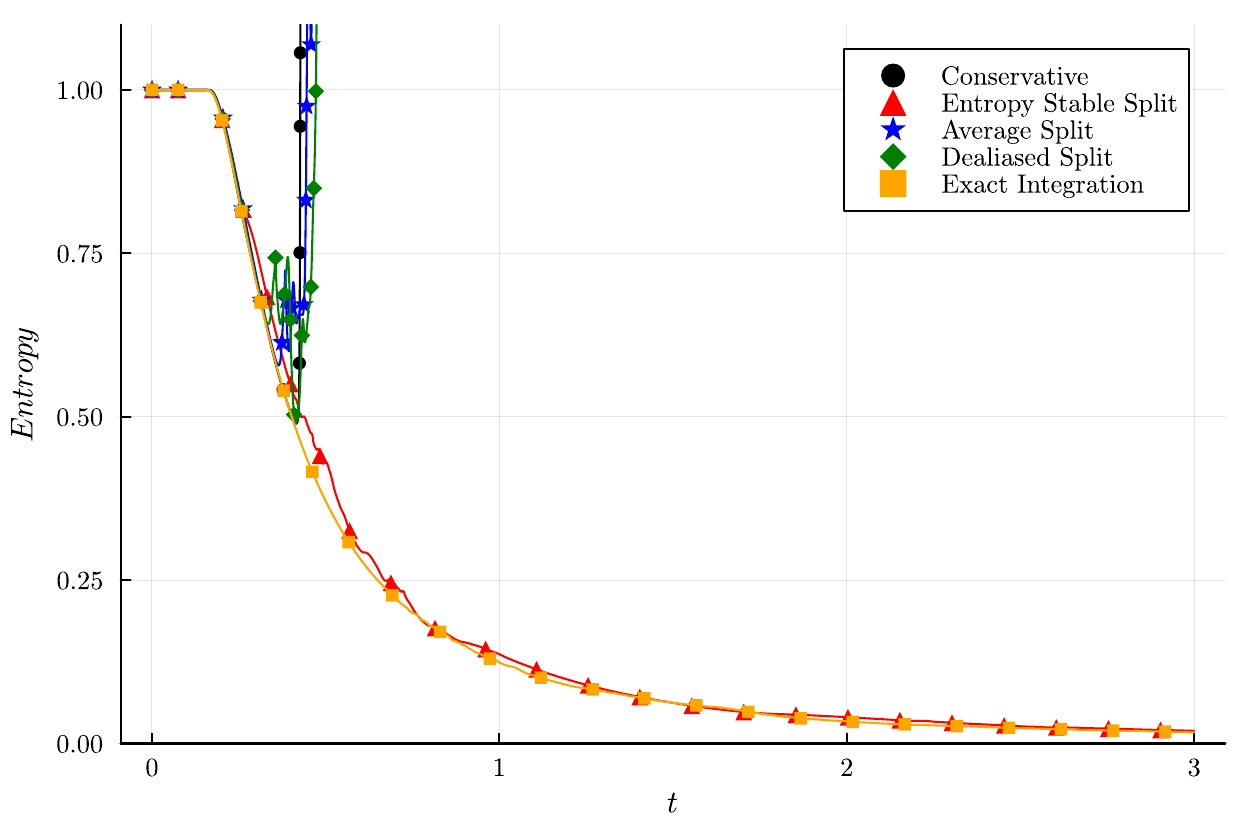}
        \caption{$p=8$.}
    \end{subfigure}
    \caption{Time evolution of the quadratic entropy for the different split forms using $(p+1)$ GLL quadrature nodes with an exact mass matrix.}
    \label{fig:entropy_vs_time_GLLEXACT}
\end{figure}
\begin{figure}
    \centering
    \begin{subfigure}{0.49\textwidth}
        \centering
        \includegraphics[width=\linewidth]{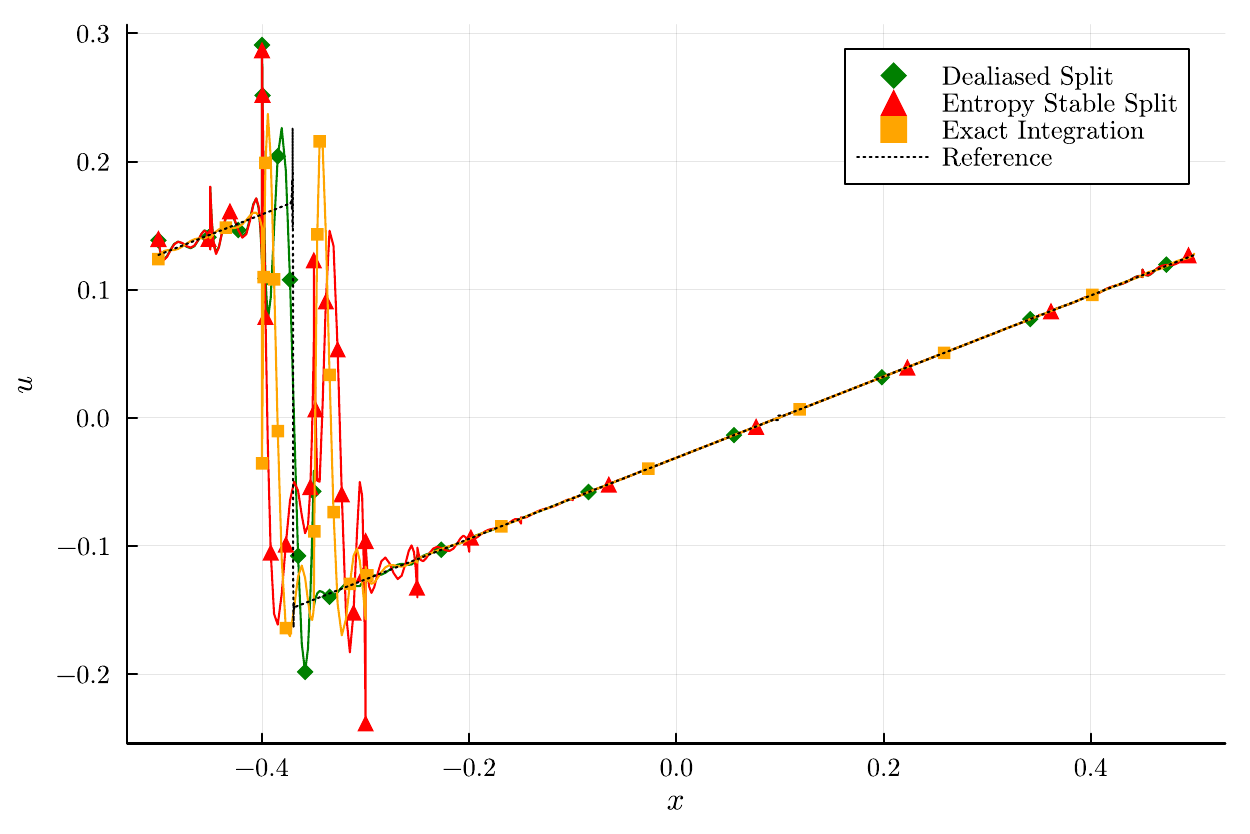}
        \caption{$p=5$.}
    \end{subfigure}
    \hfill
    \begin{subfigure}{0.49\textwidth}
        \centering
        \includegraphics[width=\linewidth]{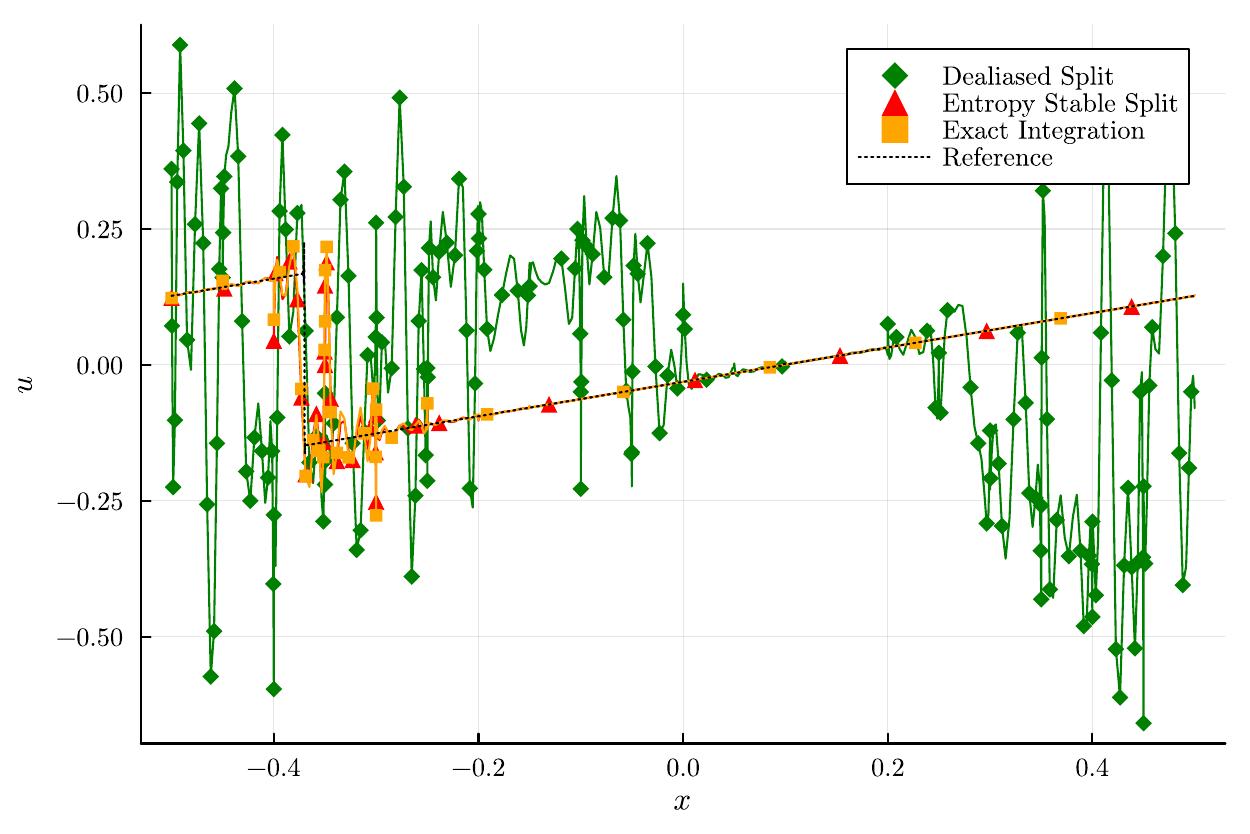}
        \caption{$p=8$.}
    \end{subfigure}
    \caption{Numerical solutions at $t=3.0$ for the different split forms using $(p+1)$ GL quadrature nodes.}
    \label{fig:sol_t3}
\end{figure}

\section{Conclusion}
{In this study, the dealiasing properties of split form DG discretizations of quadratic non-linearities were formally investigated. By introducing and studying the structural properties of the error tensor $\Delta_{ijk}(\alpha, \beta)$, we have shown that, akin to split form spectral schemes, split form DG discretizations achieve aliasing error reduction in the weakly underresolved range via integration error cancellation on the conservative and non-conservative forms, thereby generalizing the work of \cite{blaisdell1996effect} on spectral discretizations. While the dealiasing mechanism for split form DG discretizations is qualitatively similar to that observed by \cite{blaisdell1996effect} for spectral discretizations, it exhibits fundamental quantitative differences. In particular, as a byproduct of our analysis, quadrature- and order-dependent splitting coefficients that eliminate the dominant component of the aliasing error can be straightforwardly identified. These dealiased splitting coefficients are specific to DG discretizations. Through numerical experiments, it was confirmed that the dealiased splitting coefficients indeed minimize integration error in the weakly underresolved range. However, our theoretical and experimental results also show that entropy stability is required to prevent high-order components of the aliasing error from corrupting the solution when the integration time is sufficiently large. As demonstrated, this latter requirement is precisely fulfilled when the rank 3 error tensor is endowed with a skew-symmetric structure. One of the main novel contributions of this work is to translate the analysis of the aliasing errors associated with DG semi-discretization into the analysis of the structural properties of the error tensor $\Delta_{ijk}(\alpha, \beta)$. It is the opinion of the authors of this paper that this provides a conceptually elegant framework to understand and quantify the connection between aliasing errors, entropy stability and other components of the global truncation error.}

{Future work on this topic should primarily focus on extension of this study to DG discretizations of the Euler equations. While some of the arguments presented in this paper should be extendable to the Euler split formulations investigated in \cite{WINTERS20181}, it is unclear how this applies to general entropy stable discretizations written in the FD framework. More precisely, while entropy stable FD discretizations for the Euler equations share the same discrete operator structure as that considered in this work, two-point fluxes satisfying the Tadmor shuffle condition \cite{tadmor1987numerical} do not, in general, collapse to a single split formulation.}

{Additionally, the aliasing properties of entropy stable DG discretizations for the Burgers problem that do not rely on split formulations \cite{chan2018discretely, abgrall2022reinterpretation} should be assessed. Since such discretizations do not inherit their discrete operator structure from the FD framework, it would be relevant to understand the theoretical and numerical consequences of their associated aliasing errors.}

\section*{Acknowledgments}
The authors would like to thank the National Sciences and Engineering Research Council of Canada (NSERC) and McGill University for their financial support.

\section*{Competing Interests}
The authors have no conflicts of interest to declare that are relevant to the content of this paper.

\section*{AI Use}
The authors declare that no generative artificial intelligence tools were used in the preparation of this manuscript.

\appendix
\section{Continuous Split Form for Burgers Flux Differencing}
\label{app1}
The first part of the derivation is similar to that presented in Appendix by \cite{chan2018discretely}. We start with
\begin{equation}
    \mathbf{M}\frac{d\mathbf{u}}{dt} +
    \begin{bmatrix}
        \bm{\chi}_q \\
        \bm{\chi}_f
    \end{bmatrix}^T
    ((\mathbf{Q}_h - \mathbf{Q}_h^T) \circ \mathbf{F}) \mathbf{1}
    + \bm{\chi}_f^T \mathbf{B} \mathbf{f}^* = 0,
\end{equation}
with
\begin{equation}
    \mathbf{F} = \frac{1}{4}\alpha(\mathbf{1}\bar{\mathbf{u}}^T \circ \mathbf{1}\bar{\mathbf{u}}^T + \bar{\mathbf{u}}\mathbf{1}^T \circ \bar{\mathbf{u}}\mathbf{1}^T)
    + \frac{1}{2} \beta(\bar{\mathbf{u}}\mathbf{1}^T \circ \mathbf{1}\bar{\mathbf{u}}^T)
    \quad \text{where} \quad
    \bar{\mathbf{u}} :=
    \begin{bmatrix}
        \bm{\chi}_q\mathbf{u} \\ \bm{\chi}_f\mathbf{u}
    \end{bmatrix}.
\end{equation}
The skew-symmetric operator is given by
\begin{equation}
    \mathbf{Q}_h-\mathbf{Q}_h^T =
    \begin{bmatrix}
        \mathbf{P}^T\hat{\mathbf{Q}} \mathbf{P} - \mathbf{P}^T\hat{\mathbf{Q}}^T \mathbf{P} & \mathbf{P}^T \bm{\chi}_f^T \mathbf{B} \\
        -\mathbf{B} \bm{\chi}_f\mathbf{P} & 0
    \end{bmatrix}
    =
    \begin{bmatrix}
        2\mathbf{P}^T\hat{\mathbf{Q}} \mathbf{P} - \mathbf{P}^T\bm{\chi}_f^T\mathbf{B}\bm{\chi}_f\mathbf{P} & \mathbf{P}^T \bm{\chi}_f^T \mathbf{B} \\
        -\mathbf{B} \bm{\chi}_f\mathbf{P} & 0
    \end{bmatrix},
\end{equation}
where $\mathbf{P}:= \mathbf{M}^{-1}\bm{\chi}^T_q\mathbf{W}$ is the projection matrix from \cite{chan2018discretely}, $\mathbf{W}$ is a diagonal matrix storing the volume quadrature weights and $\hat{\mathbf{Q}}$ is the modal stiffness matrix. We first note that
\begin{align}
    \begin{bmatrix}
        \bm{\chi}_q \\
        \bm{\chi}_f
    \end{bmatrix}^T
    ((\mathbf{Q}_h - \mathbf{Q}_h^T) \circ \mathbf{F}) \mathbf{1}
    &=
    \begin{bmatrix}
        \bm{\chi}_q \\
        \bm{\chi}_f
    \end{bmatrix}^T
    \text{diag}((\mathbf{Q}_h - \mathbf{Q}_h^T)\mathbf{F}) \nonumber \\
    &=
    \begin{bmatrix}
        \bm{\chi}_q \\
        \bm{\chi}_f
    \end{bmatrix}^T
     \left(
     \frac{\alpha}{4}(\mathbf{Q}_h - \mathbf{Q}_h^T)(\bar{\mathbf{u}} \circ \bar{\mathbf{u}}) +
     \frac{\alpha}{4}((\mathbf{Q}_h - \mathbf{Q}_h^T)\mathbf{1}) \circ (\bar{\mathbf{u}} \circ \bar{\mathbf{u}}) +
     \frac{\beta}{2}\bar{\mathbf{u}} \circ (\mathbf{Q}_h - \mathbf{Q}_h^T)\bar{\mathbf{u}}
     \right)
\end{align}
For the first term, direct computation reveals
\begin{equation}
\frac{\alpha}{4}
\begin{bmatrix}
        \bm{\chi}_q \\
        \bm{\chi}_f
    \end{bmatrix}^T
    (\mathbf{Q}_h - \mathbf{Q}_h^T)(\bar{\mathbf{u}} \circ \bar{\mathbf{u}})
    =
    -\frac{\alpha}{2}\hat{\mathbf{Q}}\mathbf{P}(\bm{\chi}_q\mathbf{u})^2 + \frac{\alpha}{4}\bm{\chi}_f^T \mathbf{B} (\bm{\chi}_f\mathbf{u})^2.
\end{equation}
Similarly, for the second term, we have
\begin{equation}
\frac{\alpha}{4}
\begin{bmatrix}
        \bm{\chi}_q \\
        \bm{\chi}_f
    \end{bmatrix}^T
    ((\mathbf{Q}_h - \mathbf{Q}_h^T)\mathbf{1}) \circ (\bar{\mathbf{u}} \circ \bar{\mathbf{u}})
    = -\frac{\alpha}{4}\bm{\chi}_f^T \mathbf{B} (\bm{\chi}_f\mathbf{u})^2.
\end{equation}
Finally, for the third term, one finds
\begin{equation}
    {
    \frac{\beta}{2}
    \begin{bmatrix}
        \bm{\chi}_q \\
        \bm{\chi}_f
    \end{bmatrix}^T
    (\bar{\mathbf{u}} \circ (\mathbf{Q}_h - \mathbf{Q}_h^T)\bar{\mathbf{u}}))
    =
    \frac{\beta}{2}
    \begin{bmatrix}
        \bm{\chi}_q \\
        \bm{\chi}_f
    \end{bmatrix}^T
    \left(\bar{\mathbf{u}}
    \circ
    \begin{bmatrix}
        2\mathbf{P}^T \hat{\mathbf{Q}} \mathbf{u} \\
        -\mathbf{B}\bm{\chi}_f \mathbf{u}
    \end{bmatrix}\right)
    =
    \beta \bm{\chi}_q^T (\bm{\chi}_q\mathbf{u} \circ \mathbf{P}^T \hat{\mathbf{Q}} \mathbf{u}) - \frac{\beta}{2}\bm{\chi}_f ^T(\bm{\chi}_f \mathbf{u} \circ \mathbf{B}\bm{\chi}_f \mathbf{u}).
    }
\end{equation}
Putting all of this together, the scheme becomes
\begin{equation}
    {
    \mathbf{M}\frac{d\mathbf{u}}{dt}
    -\frac{\alpha}{2}\hat{\mathbf{Q}}\mathbf{P}(\bm{\chi}_q\mathbf{u})^2
    +\beta \bm{\chi}_q^T (\bm{\chi}_q\mathbf{u} \circ \mathbf{P}^T \hat{\mathbf{Q}} \mathbf{u}) - \frac{\beta}{2}\bm{\chi}_f ^T(\bm{\chi}_f \mathbf{u} \circ \mathbf{B}\bm{\chi}_f \mathbf{u})
    + \bm{\chi}_f^T \mathbf{B} \mathbf{f}^*
    = 0.
    }
\end{equation}
{In continuous form, this amounts to}
\begin{equation}
    \int_\Omega \frac{d}{dt}\Pi(\phi u) d\Omega
    - \alpha \frac{1}{2}\int_\Omega \frac{d \phi}{dx} \Pi(u^2) d\Omega
    + \beta \int_{\Omega} \frac{du}{dx}\Pi(\phi u) d\Omega
    - \beta \frac{1}{2}\int_{\Gamma} \phi u^2 \hat{n} d\Gamma
    + \int_{\Gamma} \phi f^* \hat{n} d\Gamma
    = 0,
\end{equation}
for every test function $\phi \in \mathcal{P}^p$.

\section{Main Aliased Band for Collocation Projection at the GLL and GL nodes}
\label{app2}

\subsection{GL Case}
By definition, the $(p+1)$ GL nodes are the roots of $L_{p+1}$. Hence,
\begin{equation}
    \Pi(L_{p+1}) = 0
    \implies
    \int_{\Omega} \Pi(L_{p+1}^2) d\Omega = 0.
\end{equation}
Moreover, from the Legendre product formula,
\begin{equation}
    L_{p+1}^2 =
    2
    \begin{pmatrix}
        p+1 & p+1 & 0 \\
        0 & 0 & 0
    \end{pmatrix}^2
    +
    (4p+5)
    \begin{pmatrix}
        p+1 & p+1 & 2p+2 \\
        0 & 0 & 0
    \end{pmatrix}^2L_{2p+2}
    +
    \sum_{r=1}^{2p+1}
    (2r+1)
    \begin{pmatrix}
        p+1 & p+1 & r \\
        0 & 0 & 0
    \end{pmatrix}^2L_{r}.
\end{equation}
Applying the projection and integrating both sides, we find
\begin{equation}
    0 =
    2
    \begin{pmatrix}
        0 & 0 & p+1 \\
        0 & 0 & 0
    \end{pmatrix}^2
    +
    (4p+5)
    \begin{pmatrix}
        p+1 & p+1 & 2p+2 \\
        0 & 0 & 0
    \end{pmatrix}^2
    \int_\Omega \Pi(L_{2p+2})d\Omega.
\end{equation}
Upon simplification of the Wigner $3j$ symbols via Lemma \ref{lem:Wigner1} and Lemma \ref{lem:Wigner2} (\ref{app3}), we find
\begin{equation}
    \int_{\Omega} \Pi(L_{2p+2})d\Omega
    =
    -\frac{2}{2p+3} \frac{
    \begin{pmatrix}
        4p+4 \\ 2p+2
    \end{pmatrix}
    }{
    \begin{pmatrix}
        2p+2 \\ p+1
    \end{pmatrix}^2
    }.
\end{equation}

\subsection{GLL Case}
By definition, the GLL nodes are the roots of $(x^2-1)L_p'(x)$. By the Bonnet recurrence formulas, we find
\begin{align}
    (x^2-1)L_p'(x)
    &= p(xL_p(x)-L_{p-1}(x)) \\
    &= p\left(\frac{(p+1)L_{p+1}(x)+pL_{p-1}(x)}{2p+1} - L_{p-1}(x)\right)
    = \frac{p(p+1)}{2p+1}(L_{p+1}(x)-L_{p-1}(x)).
\end{align}
Hence,
\begin{equation}
    \Pi(L_{p+1}) = \Pi(L_{p-1})
    \implies
    \Pi(L_{p+1}L_{p-1}) = \Pi(L_{p-1}^2).
\end{equation}
Integrating both sides,
\begin{equation}
    \int_{\Omega} \Pi(L_{p+1}L_{p-1}) d \Omega = \int_{\Omega} \Pi(L_{p-1}^2) d \Omega = \int_{\Omega} L_{p-1}^2 d \Omega = \frac{2}{2p-1}.
\end{equation}
From the Legendre product formula,
\begin{equation}
    L_{p-1} L_{p+1}
    =
    (4p+1)
    \begin{pmatrix}
        p-1 & p+1 & 2p \\
        0 & 0 & 0
    \end{pmatrix}^2
    L_{2p} +
    \sum_{r=2}^{2p-1} (2r+1)
    \begin{pmatrix}
        p-1 & p+1 & r \\
        0 & 0 & 0
    \end{pmatrix}^2
    L_r.
\end{equation}
Applying the projection and integrating both sides,
\begin{equation}
    \frac{2}{2p-1}
    =
    \int_{\Omega}\Pi(L_{p-1} L_{p+1})d\Omega
    =
    (4p+1)
    \begin{pmatrix}
        p-1 & p+1 & 2p \\
        0 & 0 & 0
    \end{pmatrix}^2
    \int_{\Omega}\Pi(L_{2p})d\Omega.
\end{equation}
Upon simplification of the Wigner $3j$ symbol via Lemma \ref{lem:Wigner1} (\ref{app3}), one finds
\begin{equation}
    \int_{\Omega} \Pi(L_{2p})d\Omega
    =
    \frac{2}{2p-1}
    \frac{
    \begin{pmatrix}
        4p \\ 2p
    \end{pmatrix}
    }
    {
    \begin{pmatrix}
        2p+2 \\ p+1
    \end{pmatrix}
    \begin{pmatrix}
        2p-2 \\ p-1
    \end{pmatrix}
    }.
\end{equation}

\section{Wigner 3j Symbol Simplification}
\label{app3}
{We compute the Wigner $3j$ symbol for two specific cases that are useful in the context of this work.
\begin{lemma}
Let $m,n \in \mathbb{N}$. Then,
    \begin{equation}
        \begin{pmatrix}
            m & n & m+n \\
            0 & 0 & 0
        \end{pmatrix}^2
        =
        \frac{1}{2m+2n+1}
        \frac{
        \begin{pmatrix}
            2m \\ m
        \end{pmatrix}
        \begin{pmatrix}
            2n \\ n
        \end{pmatrix}
        }{
        \begin{pmatrix}
            2m+2n \\ m+n
        \end{pmatrix}
        }
    \end{equation}
    \label{lem:Wigner1}
\end{lemma}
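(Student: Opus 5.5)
The plan is to use the classical closed form of the Wigner $3j$ symbol with all projections zero. For $J = a+b+c$ even, with $g = J/2$,
\begin{equation}
\begin{pmatrix}
a & b & c \\
0 & 0 & 0
\end{pmatrix}^2
=
\frac{(J-2a)!\,(J-2b)!\,(J-2c)!}{(J+1)!}
\left(\frac{g!}{(g-a)!\,(g-b)!\,(g-c)!}\right)^2 .
\end{equation}
This formula is standard; it follows from Racah's formula, or equivalently from Adams' product formula for Legendre polynomials. I would take it as known, and, if needed, cite it alongside the Legendre product identity already used in Lemma \ref{lem:proj_diag}.

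I would specialize to $a=m$, $b=n$, $c=m+n$. Then $J = 2m+2n$ is even and $g = m+n$. The factorials become
\begin{equation}
J-2a = 2n,\quad J-2b = 2m,\quad J-2c = 0,\quad g-a = n,\quad g-b = m,\quad g-c = 0 .
\end{equation}
Substituting gives
\begin{equation}
\frac{(2n)!\,(2m)!}{(2m+2n+1)!}\left(\frac{(m+n)!}{n!\,m!}\right)^2 .
\end{equation}

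Next I would regroup this expression into central binomial coefficients. Write $(2m)!/(m!)^2 = \binom{2m}{m}$ and $(2n)!/(n!)^2 = \binom{2n}{n}$. For the remaining factor, $((m+n)!)^2/(2m+2n+1)! = 1/\bigl((2m+2n+1)\binom{2m+2n}{m+n}\bigr)$. Multiplying these pieces together yields exactly the claimed identity.

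I do not expect a real obstacle here, since the whole argument is bookkeeping once the closed form is available. The only point needing care is that the statement should cover the edge cases. When $m=0$ or $n=0$, the formula still applies and gives $1/(2n+1)$ for $m=0$, which is consistent with the known value. If a self-contained route is preferred, I would instead compare the leading coefficients in $L_mL_n$. The coefficient of $L_{m+n}$ must equal the ratio of leading coefficients, where $L_k$ has leading coefficient $\binom{2k}{k}/2^k$. That gives $(2m+2n+1)\bigl(\begin{smallmatrix} m & n & m+n\\0&0&0\end{smallmatrix}\bigr)^2 = \binom{2m}{m}\binom{2n}{n}/\binom{2m+2n}{m+n}$ directly, and this is the cleaner argument I would actually write.
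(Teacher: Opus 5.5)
Your first route is the paper's proof: you specialize DLMF Eq.~(34.3.5) to $J=2m+2n$, note $J-2c=g-c=0$, and regroup the factorials into central binomial coefficients. Your bookkeeping and the $m=0$ check are correct.

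The leading-coefficient argument you say you would actually write is a genuinely different route, and it is also correct. It replaces the Racah-type closed form with two ingredients:
\begin{itemize}
\item the linearization identity $L_mL_n=\sum_{r}(2r+1)\bigl(\begin{smallmatrix} m & n & r\\ 0 & 0 & 0\end{smallmatrix}\bigr)^2L_r$, which the paper already takes as given in Lemmas~\ref{lem:proj_diag} and \ref{lem:tdelta} and in \ref{app2};
\item the leading coefficient $\binom{2k}{k}/2^k$ of $L_k$, from Rodrigues' formula.
\end{itemize}
Only the $r=m+n$ term reaches degree $m+n$. Matching the $x^{m+n}$ coefficients therefore gives $(2m+2n+1)\bigl(\begin{smallmatrix} m & n & m+n\\ 0 & 0 & 0\end{smallmatrix}\bigr)^2=\binom{2m}{m}\binom{2n}{n}/\binom{2m+2n}{m+n}$ in one line, with no factorial manipulation.

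What your route buys is economy and self-containedness within the paper. The lemma is read off from the very expansion in which it is later used, no extra formula needs citing, and it shows structurally why the top coefficient has such a clean product form.

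It is not, however, more elementary at the level of foundations. The linearization identity with coefficients $(2r+1)(3j)^2$ is equivalent to the Gaunt integral $\int_{-1}^{1}L_mL_nL_r\,dx=2\bigl(\begin{smallmatrix} m & n & r\\ 0 & 0 & 0\end{smallmatrix}\bigr)^2$. That is a standard but nontrivial fact, of comparable depth to the closed form the paper cites. The paper's route, for its part, yields every zero-projection $3j$ symbol from a single formula and is reused directly for Lemma~\ref{lem:Wigner2}.
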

\begin{proof}
    Using Eq.(34.3.5) in \cite{NIST:DLMF} with $J=2m+2n$,
    \begin{align}
        \begin{pmatrix}
            m & n & m+n \\
            0 & 0 & 0
        \end{pmatrix}^2
        &=
        \frac{(J-2m)!(J-2n)!(J-2m-2n)!}{(J+1)!}
        \left(\frac{\left(\frac{1}{2}J\right)!}{\left(\frac{1}{2}J-m\right)!\left(\frac{1}{2}J-n\right)!\left(\frac{1}{2}J-m-n\right)!}\right)^2 \nonumber \\
        &=
        \frac{(2n)!(2m)!}{(2m+2n+1)(2m+2n)!}\left(\frac{(m+n)!}{n!m!}\right)^2
        = \frac{1}{2m+2n+1}
        \frac{
        \begin{pmatrix}
            2m \\ m
        \end{pmatrix}
        \begin{pmatrix}
            2n \\ n
        \end{pmatrix}
        }
        {
        \begin{pmatrix}
            2m + 2n \\ m+n
        \end{pmatrix}
        }.
    \end{align}
\end{proof}
\begin{lemma}
    Let $m \in \mathbb{N}$. Then,
    \begin{equation}
        \begin{pmatrix}
            m & m & 0 \\
            0 & 0 & 0
        \end{pmatrix}^2
        =
        \frac{1}{2m+1}.
    \end{equation}
    \label{lem:Wigner2}
\end{lemma}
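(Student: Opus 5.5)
Since the Legendre polynomials satisfy $L_m(1)=1$ and $L_m(-1)=(-1)^m$, the plan is to evaluate the integrals through these endpoint values. The left-hand side can be written as $\int_\Gamma L_mL_mL_0\hat{n}\,d\Gamma - \int_\Omega \frac{d}{dx}(L_m^2)\,d\Omega$, which is just one way of organizing a computation that is in fact more direct. So the approach I will take is the classical identity for the constant term of the product expansion. Setting $n=m$ in the Legendre product identity, the coefficient of $L_0$ in $L_mL_m$ is $(2\cdot 0+1)\begin{pmatrix} m & m & 0 \\ 0&0&0\end{pmatrix}^2$. Integrating the expansion over $\Omega$ and using orthogonality ($\int_\Omega L_r\,d\Omega = 0$ for $r\geq 1$) gives
\begin{equation}
\int_\Omega L_m^2\,d\Omega = 2\begin{pmatrix} m & m & 0 \\ 0&0&0\end{pmatrix}^2 .
\end{equation}
Combining this with the standard normalization $\int_\Omega L_m^2\,d\Omega = 2/(2m+1)$ yields the claim immediately.

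To guard against any ambiguity in the convention of the product formula, I will also check the result directly from Eq.(34.3.5) of \cite{NIST:DLMF}, following the same route as in Lemma \ref{lem:Wigner1}. Take $j_1=j_2=m$, $j_3=0$, $J=2m$, which is even as required. The factorial prefactor is $\frac{(J-2m)!(J-2m)!(J)!}{(J+1)!} = \frac{(2m)!}{(2m+1)!} = \frac{1}{2m+1}$. The squared bracket is $\left(\frac{m!}{0!\,0!\,m!}\right)^2 = 1$. Together these reproduce $\frac{1}{2m+1}$.

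There is no real obstacle here. The only point requiring care is matching the index roles in the DLMF formula, namely which $j$ is paired with which factorial. This is routine because the symbol is symmetric under permutation of its columns when the bottom row is zero. The case $m=0$ is consistent with both arguments.
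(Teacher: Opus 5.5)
Your proof is correct. Your second computation (DLMF Eq.~(34.3.5) with $J=2m$, prefactor $(2m)!/(2m+1)!=1/(2m+1)$ and a unit bracket) is exactly the paper's proof. Your primary route, however, is genuinely different. Instead of evaluating the closed form for the $3j$ symbol, you read off the $r=0$ coefficient in the linearization formula $L_mL_n=\sum_r(2r+1)\begin{pmatrix} m & n & r\\ 0&0&0\end{pmatrix}^2L_r$ and integrate. Orthogonality then reduces the lemma to the normalization $\int_\Omega L_m^2\,d\Omega=2/(2m+1)$.

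What this buys you:
\begin{itemize}
\item It avoids all factorial bookkeeping.
\item It uses only the product identity the paper already relies on in Lemmas~\ref{lem:proj_diag} and \ref{lem:tdelta} and in \ref{app2}.
\item It shows what the lemma really says where it is used. In the GL computation of \ref{app2}, the constant term $2\begin{pmatrix} p+1 & p+1 & 0\\0&0&0\end{pmatrix}^2$ of $L_{p+1}^2$ is simply $\int_\Omega L_{p+1}^2\,d\Omega$.
\end{itemize}
The price is that your argument is only as reliable as the exact normalization of that product identity. The identity is usually derived from Gaunt's integral $\int_\Omega L_aL_bL_c\,d\Omega=2\begin{pmatrix} a&b&c\\0&0&0\end{pmatrix}^2$, and your lemma is essentially its case $c=0$. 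The DLMF evaluation, by contrast, stands on its own, so running both, as you do, covers this.

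Do delete your opening sentence. The expression $\int_\Gamma L_m^2L_0\hat{n}\,d\Gamma-\int_\Omega \frac{d}{dx}(L_m^2)\,d\Omega$ vanishes identically, since both terms equal $L_m(1)^2-L_m(-1)^2=0$. It therefore cannot equal the left-hand side, which is $1/(2m+1)$. Nothing later depends on it, but as written it is a false assertion.
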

\begin{proof}
    Using Eq.(34.3.5) in \cite{NIST:DLMF} with $J=2m$,
    \begin{equation}
        \begin{pmatrix}
            m & m & 0 \\
            0 & 0 & 0
        \end{pmatrix}^2
        =
        \frac{(J-2m)!(J-2m)!J!}{(J+1)!}
        \left(\frac{\left(\frac{1}{2}J\right)!}{\left(\frac{1}{2}J-m\right)!\left(\frac{1}{2}J-m\right)!\left(\frac{1}{2}J\right)!}\right)^2
        =
        \frac{1}{2m+1}.
    \end{equation}
\end{proof}
}

\section{Convergence Rate of the Integration Error}
\label{app4}
{We briefly sketch a derivation for the convergence rate of the integration error for a valid collocation projection operator associated with a quadrature strength $\tau$. We refer the reader to the work of \cite{https://doi.org/10.1002/num.22089} for a thorough derivation of error estimates for the DG formulation. Let $f \in \mathcal{P}^{M}$ for $M \geq \tau -p + 2$, $\phi \in \mathcal{P}^p$ and $\Omega_x$ and $\Omega$ denote the physical element and the reference element respectively. Moreover, we let $x$ denote the spatial coordinate on the physical element and $\xi$ represent the latter on the reference element, and assume that $\Omega$ and $\Omega_x$ are related via a linear mapping. The conservative DG discretization constructs a discrete approximation $g \in \mathcal{P}^p$ via
\begin{equation}
   \int_{\Omega_x}\phi g d\Omega_x =\int_{\Omega_x} \frac{d\phi}{dx} \Pi(f) d\Omega_x = \int_{\Omega_x} \Pi\left(\frac{d\phi}{dx}f\right) d\Omega_x.
   \label{eq:DG_conser_rate}
\end{equation}
On the reference element, this can be written as
\begin{equation}
   \int_{\Omega}\phi g \Delta x d\Omega = \int_{\Omega} \Pi\left(\frac{d\phi}{dx}f\right) d\Omega,
\end{equation}
where $\Delta x := |\Omega_x|$. Moreover, on the reference element, $f$ can be written as
\begin{equation}
    f(\xi) = \sum_{i=0}^{M} (\Delta x)^i f_i (\xi-c)^i = \sum_{i=0}^{\tau-p+1} (\Delta x)^i f_i (\xi-c)^i + \sum_{i=\tau-p+2}^{M} (\Delta x)^i f_i (\xi-c)^i,
\end{equation}
where $c \in \mathbb{R}$ is a constant resulting from the linear mapping of the reference element to the physical element. Substituting this in Eq.(\ref{eq:DG_conser_rate}), we find
\begin{equation}
    \int_{\Omega}\phi g d\Omega = \int_{\Omega} \Pi\left(\frac{d\phi}{dx}\sum_{i=0}^{\tau-p+1} (\Delta x)^{i-1} f_i (\xi-c)^i\right) d\Omega +
    \int_{\Omega} \Pi\left(\frac{d\phi}{dx}\sum_{i=\tau-p+2}^{M} (\Delta x)^{i-1} f_i (\xi-c)^i\right) d\Omega .
\end{equation}
The leftmost term is integrated exactly, while the rightmost term will lead to aliasing errors of order $O(\Delta x^{\tau-p+1})$. The same argument can be applied to the non-conservative discretization to find that the associated aliasing errors scale with $O(\Delta x^{\tau-p+1})$. Hence, provided that the leading-order error term from the conservative and non-conservative discretizations do not cancel, the aliasing errors on any split form DG discretization should also scale with $O(\Delta x^{\tau-p+1})$. From Corollary \ref{corr:optimal_splits}, it is known that such a cancellation will occur if and only if the suitable dealiased splitting coefficients are utilized. Moreover, from Theorem \ref{theo:Delta}, it is known that the dealiased split coefficients do not lead to cancellation of the aliasing error for higher-order components of the flux. Hence, the aliasing error will scale with $O(\Delta x^{\tau-p+2})$ in this case.}

 \bibliographystyle{elsarticle-num} 
 \bibliography{bib.bib}




\end{document}